\numberwithin{equation}{section}
\newtheorem{thm}{Theorem}[section]
\newtheorem{rmk}{Remark}[section]
\newtheorem{scm}{Scheme}[section]
\def\wh{\widehat}
\def\wt{\widetilde}
\def\sech{\mathrm{sech}}
\begin{document}

\begin{frontmatter}

\title{A novel class of energy-preserving Runge-Kutta methods for the Korteweg-de Vries equation}

\author[mymainaddress,mythirdaddress]{Yue Chen}
\author[mymainaddress,mythirdaddress,mysecondaddress]{Yuezheng Gong}
\cortext[mycorrespondingauthor]{Corresponding author}
\ead{gongyuezheng@nuaa.edu.cn}
\author[mymainaddress,mythirdaddress,mysecondaddress]{Qi Hong}
\author[mymainaddress,mythirdaddress]{Chunwu Wang}
\address[mymainaddress]{Department of Mathematics, Nanjing University of Aeronautics and Astronautics, Nanjing 211106, China}
\address[mythirdaddress]{Key Laboratory of Mathematical Modelling and High Performance Computing of Air Vehicles (NUAA), MIIT, Nanjing  211106, China}
\address[mysecondaddress]{Jiangsu Key Laboratory for Numerical Simulation of Large Scale Complex Systems, Nanjing  210023, China}
	
	\begin{abstract}
		In this paper, we present a quadratic auxiliary variable approach to develop a new class of energy-preserving Runge-Kutta methods for the Korteweg-de Vries equation. The quadratic auxiliary variable approach is first proposed to reformulate the original model into an equivalent system, which transforms the energy conservation law of the Korteweg-de Vries equation into two quadratic invariants of the reformulated system. Then the symplectic Runge-Kutta methods are directly employed for the reformulated model to arrive at a new kind of time semi-discrete schemes for the original problem. Under the consistent initial condition, the proposed methods are rigorously proved to maintain the original energy conservation law of the Korteweg-de Vries equation. In addition, the Fourier pseudo-spectral method is used for spatial discretization, resulting in fully discrete energy-preserving schemes. To implement the proposed methods effectively, we present a very efficient iterative technique, which not only greatly saves the calculation cost, but also achieves the purpose of practically preserving structure. Ample numerical results are addressed to confirm the expected order of accuracy, conservative property and efficiency of the proposed algorithms. 
	\end{abstract}
	
	\begin{keyword}
		Quadratic auxiliary variable approach;
		Symplectic Runge-Kutta scheme;
		Energy-preserving algorithm;
		Fourier pseudo-spectral method.
	\end{keyword}
	
\end{frontmatter}

\section{Introduction}
	In this paper, we are concerned with the Korteweg-de Vries (KdV) equation
	\begin{equation}\label{KdVeq}
		u_t+\eta u u_x+\mu^2u_{xxx}=0, \quad (x,t)\in [a,b]\times (0,T],
	\end{equation}
	with periodic boundary condition
	\begin{equation}\label{BC}
		u(a,t) = u(b,t), \quad t\in  [0,T],
	\end{equation}
	and initial condition
	\begin{equation}\label{IC}
		u(x,0) = u_0(x),\quad  x\in  [a,b],
	\end{equation}
	where $\eta$ and $\mu$ are two real parameters. It is an important nonlinear hyperbolic equation with smooth solution at all times and also a mathematical waves on shallow water surfaces. Eq. \eqref{KdVeq} has been used to describe various phenomena such as waves in bubble-liquid mixtures, acoustic waves in an anharmonic crystal, magnetohydrodynamic waves in warm plasma and ion acoustic waves \cite{1965Interaction}.

In the past half century, numerous numerical methods have been developed for the KdV equation, including Galerkin methods \cite{WintherMC1980,BonaFully1986,Yan2002A,Xu2007Error}, finite difference schemes \cite{Ascher2004Multisymplectic,Zhao2000Multisymplectic}, Fourier spectral or pseudo-spectral methods \cite{Gong2014Some,Brugnano2019Energy}, operator splitting and exponential-type integrators \cite{Holden2011Operator,Martina2016An}, etc. Recently, there has been a surge on constructing numerical methods for dynamical systems governed by differential equations to preserve as many properties of the continuous system as possible. Numerical methods that preserve at least some of the structural properties of the continuous dynamical system are called geometric integrators or structure-preserving algorithms \cite{Feng2003book,2006Geometric}. Many geometric integrators have been presented for the KdV equation, especially the symplectic and multisymplectic schemes \cite{Huang1991A,Zhao2000Multisymplectic,Ascher2004Multisymplectic,Wang2004Numerical}. In recent years, various energy-preserving and momentum-preserving algorithms have been developed for this equation as well \cite{Frutos1997Accuracy,Cui2007Numerical,Karasozen2013Energy}. More recently, some local structure-preserving algorithms, originally discussed by Wang et al. \cite{Wang2008Local}, have been applied for the KdV equation \cite{Gong2014Some,Wang2017Local}. However, most of the existing structure-preserving algorithms are only up to second order in time, which cannot usually provide long time accurate solutions with a given large time step.

As a matter of fact, how to devise high-order invariant-preserving methods for conservative systems has attracted a lot of attention in recent years. It is well known that all Runge-Kutta (RK) methods preserve linear invariants, while only those that satisfy the symplectic condition conserve all quadratic invariants \cite{Cooper1987Stability}. For canonical Hamiltonian systems, many high-order energy-preserving algorithms have been developed, including high-order averaged vector field (AVF) methods \cite{Quispel2008A,Wang2012A,Li2016A}, Hamiltonian Boundary Value Methods (HBVMs) \cite{Brugnano2010Hamiltonian}, energy-preserving variant of collocation methods \cite{Hairer2010Energy} and time finite element methods \cite{Tang2012Time}, etc. In addition, the above mentioned high-order energy-preserving methods are also valid for Hamiltonian systems with constant skew-symmetric structural matrix. For general conservative systems, these methods should be further discussed (e.g., see \cite{Cohen2011Linear,Brugnano2012Energy,Brugnano2020Arbitrarily}). As far as we know, the HBVMs have been applied for the KdV equation to obtain high-order energy-preserving methods \cite{Song2017Hamiltonian,Brugnano2019Energy}. It should be noted that all of these methods involve integrals, which often need to be replaced by high-precision numerical integration formulas for practicality. Therefore, these methods can exactly conserve polynomial energy, while they can only be practically energy-preserving for non-polynomial cases \cite{Brugnano2016Line}.

Recently, the invariant energy quadratization (IEQ) \cite{Yang2017Numerical} and the scalar auxiliary variable (SAV) approaches \cite{Shen2018The}, originally proposed for gradient flow models, have been successfully applied for various conservative systems \cite{Jiang2019AJSC,Jiang2020AJSC,Cai2020Two}. Based on these techniques, many high-order structure-preserving algorithms have been developed for various models, including dissipative systems \cite{Akrivis2019Energy,GongSISC2020ARBITRARILY,Gong2020ArbitrarilyJCP} and conservative systems \cite{Jiang2020Arbitrarily,Zhang2020Novel,Jiang2021Explicit}. However, different from traditional structure-preserving algorithms, these numerical strategies only maintain a modified quadratic energy, which may not be the essential property of the original model. 

In this paper, we propose a new numerical strategy to develop arbitrarily high-order energy-preserving algorithms for general conservative systems with a polynomial energy. We first present a quadratic auxiliary variable (QAV) approach to reformulate the original model into an equivalent system, which transforms the energy conservation law of the original problem into two quadratic invariants of the reformulated system. It is important to note that if a numerical method can preserve the two quadratic invariants of the new system, it will retain the original energy conservation law. Fortunately, symplectic RK methods can conserve all quadratic invariants, so they are used directly for the reformulated system to develop a novel class of energy-preserving algorithms for the original model.
Under the consistent initial condition, the new proposed methods are rigorously proved to preserve the original energy conservation law. For the sake of clarity, we will take the KdV equation as an example to illustrate the idea. Furthermore, the Fourier pseudo-spectral method is employed for developing spatial structure-preserving discretization, resulting in fully discrete high-order energy-preserving schemes. In addition, we provide a very efficient iterative technique to solve the proposed nonlinear schemes, which not only greatly saves the computing cost, but also achieves the purpose of practically preserving structure. Numerical experiments are presented to demonstrate the accuracy, conservative property and efficiency of the proposed methods. 

The rest of this paper is organized as follows. In section 2, we present the QAV approach to reformulate the KdV equation and discuss the structure-preserving properties of the reformulated system. In section 3, we propose a class of high-order energy-preserving schemes based on the QAV reformulation. In section 4, the Fourier pseudo-spectral method is employed to give rise to the spatial discretization. A practically structure-preserving iterative technique is developed in section 5. Numerical examples are shown to validate the accuracy and efficiency  of the proposed schemes in section 6. Finally, we give some conclusions in the last section.

\section{Quadratic auxiliary variable (QAV) approach}

In this section, we present the QAV approach  to reformulate the KdV equation into an equivalent form, which transforms the original energy conservation law into two quadratic invariants of the new system. It is worth noting that the original energy reduces a weak invariant of the new system with the consistent initial condition. To our surprise, the QAV reformulation will provide an elegant platform, which allows a class of RK methods to be used directly to develop arbitrarily high-order energy-preserving algorithms that conserve the original energy exactly.

From a mathematical point of view, the KdV equation \eqref{KdVeq} has a bi-Hamiltonian structure, since it can be written in Hamiltonian form in two different ways \cite{Karasozen2013Energy}. We here consider the following energy-preserving Hamiltonian formulation
\begin{equation}\label{Hamilton-KdV}
	u_t = \mathcal{D} \frac{\delta \mathcal{H}}{\delta u},
\end{equation}
where $\mathcal{D} = \partial_x$ and 
$
\frac{\delta \mathcal{H}}{\delta u} = -\frac{\eta}{2} u^2   - \mu^2 u_{xx} 
$ is the variational derivative of the Hamiltonian functional
\begin{equation}\label{Hamilton-energy}
	\mathcal{H}[u] = \int_{a}^{b} \left(-\frac{\eta}{6} u^3 + \frac{\mu^2}{2} u_x^2 \right) \mathrm{d} x.
\end{equation}
It is readily shown that the model \eqref{Hamilton-KdV} with the periodic boundary condition satisfies the following energy conservation law
\begin{equation}
	\frac{\mathrm{d}\mathcal{H}}{\mathrm{d}t} = \left(\frac{\delta \mathcal{H}}{\delta u},u_t\right) = \left(\frac{\delta \mathcal{H}}{\delta u},\mathcal{D} \frac{\delta \mathcal{H}}{\delta u}\right) = 0,
\end{equation}
which implies 
\begin{equation}\label{ECL}
	\mathcal{H}(t) \equiv \mathcal{H}(0),
\end{equation} 
where $(f,g) = \int_{a}^{b} f g \mathrm{d} x$ and the associated $L^2$ norm $\|f\| = \sqrt{(f,f)}$ for any $ f, g\in L^2([a,b]). $ In addition, the KdV equation possesses  mass conservation law
\begin{equation}
	\frac{\mathrm{d}}{\mathrm{d}t} (u,1) = (u_t,1) = \left(\partial_x \frac{\delta \mathcal{H}}{\delta u},1\right) = 0.
\end{equation}
This means
\begin{equation}\label{MCL}
	(u, 1) \equiv \big(u_0(x), 1\big).
\end{equation}
The conservative properties \eqref{ECL} and \eqref{MCL} are important for the correct numerical simulation of such problem.

Next we propose the QAV approach to reformulate the KdV equation \eqref{Hamilton-KdV}. Introducing a quadratic auxiliary variable 
\begin{equation}\label{auxiliary-variable}
	q = u^2,
\end{equation}
the original energy \eqref{Hamilton-energy} can be written into a modified quadratic form
\begin{equation}\label{modified-energy}
	\mathcal{E}[u,q] =  -\frac{\eta}{6} (u,q) + \frac{\mu^2}{2} \|u_x\|^2.
\end{equation}
According to energy variational principle, we reformulate the model \eqref{Hamilton-KdV} to an equivalent system
\begin{equation}\label{QAVsystem}
	\begin{cases}
		u_t = \partial_x \left(-\frac{\eta}{6} q - \frac{\eta}{3} u^2 - \mu^2 u_{xx}\right),\\
		q_t = 2u u_t,\\
	\end{cases}
\end{equation}
with the consistent initial condition
\begin{equation}\label{CIC}
	q(x,0) = \big(u(x,0)\big)^2.
\end{equation}
Letting $z = (u, q)^T$,  the system \eqref{QAVsystem} can be written in the following Poisson form
\begin{equation}
	z_t = \mathcal{B}(z) \frac{\delta \mathcal{E}}{\delta z},
\end{equation}
where the modified energy $\mathcal{E}$ is defined in \eqref{modified-energy} and the skew-adjoint operator $\mathcal{B}(z)$ is given by
\begin{equation*}
	\mathcal{B}(z) = \left(\begin{array}{ll}
		\partial_x & 2 \partial_x u\\
		2u \partial_x & 4u \partial_x u
	\end{array}\right).
\end{equation*}

\begin{thm}
	Under periodic boundary conditions, the QAV system \eqref{QAVsystem} satisfies the following conservation laws
	\begin{align}
		& \big(u(x,t),1\big) \equiv \big(u(x,0),1\big),\quad \forall\; t,\label{QAVMCL}\\
		& q(x,t)-\big(u(x,t)\big)^2 \equiv q(x,0)-\big(u(x,0)\big)^2,\quad \forall\;  x,\; t, \label{QAVCL}\\
		& \mathcal{E}(t) \equiv \mathcal{E}(0), \quad \forall\; t. \label{QAVECL}
	\end{align}
\end{thm}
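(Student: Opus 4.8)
The plan is to establish the three laws separately, each resting on a distinct and essentially elementary mechanism: the mass law follows from the divergence structure of the first equation, the pointwise quadratic relation follows directly from the defining constraint $q_t = 2uu_t$, and the energy law follows from the skew-adjointness encoded in the Poisson form. All three computations lean on the periodic boundary condition to discard boundary terms.

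For the mass law \eqref{QAVMCL}, I would differentiate $(u,1)$ in time and substitute the first equation of \eqref{QAVsystem}. Since its right-hand side is $\partial_x$ applied to a smooth periodic field, integrating over $[a,b]$ and invoking periodicity annihilates the boundary contributions, so that $\frac{\mathrm{d}}{\mathrm{d}t}(u,1)=0$, which integrates to \eqref{QAVMCL}.

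The invariant \eqref{QAVCL} is the simplest and is the structural payoff of the auxiliary variable. The second equation of \eqref{QAVsystem} reads $q_t = 2uu_t = \partial_t(u^2)$, hence $\partial_t\big(q-u^2\big)\equiv 0$ at every point $x$. Integrating in $t$ gives the pointwise identity, independently of any integration by parts.

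The energy law \eqref{QAVECL} is the most substantial step. I would compute
\[
\frac{\mathrm{d}\mathcal{E}}{\mathrm{d}t} = -\frac{\eta}{6}(u_t,q) - \frac{\eta}{6}(u,q_t) + \mu^2(u_x,u_{xt}),
\]
then use the constraint $q_t = 2uu_t$ to rewrite the middle term as $-\frac{\eta}{3}(u^2,u_t)$, and integrate the last term by parts (periodicity) to get $-\mu^2(u_{xx},u_t)$. Writing $w = -\frac{\eta}{6}q - \frac{\eta}{3}u^2 - \mu^2 u_{xx}$, these collapse into $\frac{\mathrm{d}\mathcal{E}}{\mathrm{d}t} = (w,u_t) = (w,\partial_x w) = \frac{1}{2}\int_a^b \partial_x(w^2)\,\mathrm{d}x = 0$, again by periodicity. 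Equivalently, one may argue abstractly from the Poisson form $z_t = \mathcal{B}(z)\frac{\delta \mathcal{E}}{\delta z}$, so that $\frac{\mathrm{d}\mathcal{E}}{\mathrm{d}t} = \big(\frac{\delta \mathcal{E}}{\delta z},\mathcal{B}(z)\frac{\delta \mathcal{E}}{\delta z}\big) = 0$ once $\mathcal{B}(z)$ is confirmed skew-adjoint. I expect the genuine work to lie in this latter verification for the variable-coefficient operator: in particular, checking that the off-diagonal blocks $2\partial_x u$ and $2u\partial_x$ are negative adjoints of one another and that the $(2,2)$ entry $4u\partial_x u$ is itself skew-adjoint, each step resting on an integration by parts that uses the periodic boundary condition. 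The crucial conceptual point throughout is that the coupling term $(u,q_t)=2(u^2,u_t)$ is exactly what supplies the $-\frac{\eta}{3}u^2$ contribution needed to reconstitute $w$ as the full flux $u_t=\partial_x w$.
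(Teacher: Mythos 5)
Your proposal is correct and follows essentially the same route as the paper: the mass law from the divergence form and periodicity, the pointwise invariant from rewriting the second equation as $\partial_t(q-u^2)=0$, and the energy law by collapsing the three terms of $\frac{\mathrm{d}\mathcal{E}}{\mathrm{d}t}$ into $\big(w,\partial_x w\big)=0$ with $w=-\frac{\eta}{6}q-\frac{\eta}{3}u^2-\mu^2 u_{xx}$. The alternative skew-adjointness argument via the Poisson form is a valid but unnecessary addition; the paper does not take that route.
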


\begin{proof}
	As described in \eqref{MCL}, we can obtain the mass conservation \eqref{QAVMCL} from the new system \eqref{QAVsystem}. The second equation of the QAV system \eqref{QAVsystem} can be written as
	\begin{equation*}
		\partial_t (q - u^2) = 0,
	\end{equation*}
	which implies \eqref{QAVCL}.  
	
	By some calculations, we can deduce the modified energy conservation law from the QAV system \eqref{QAVsystem}
	%	\begin{eqnarray*}
	%	\frac{\mathrm{d}\mathcal{E}}{\mathrm{d}t} &=& -\frac{\eta}{6} (u_t,q) -\frac{\eta}{6} (u,q_t) - \mu^2 (u_{xx},u_t) \\
	%	&=& \left(-\frac{\eta}{6}q - \frac{\eta}{3} u^2 - \mu^2 u_{xx},u_t\right) \\
	%	&=& \left(-\frac{\eta}{6}q - \frac{\eta}{3} u^2 - \mu^2 u_{xx},\partial_x\left( -\frac{\eta}{6}q - \frac{\eta}{3} u^2 - \mu^2 u_{xx} \right)\right) \\
	%	&=& 0,
	%	\end{eqnarray*}
	\begin{align*}
		\frac{\mathrm{d}\mathcal{E}}{\mathrm{d}t} &= -\frac{\eta}{6} (u_t,q) -\frac{\eta}{6} (u,q_t) - \mu^2 (u_{xx},u_t) \\
		&= \left(-\frac{\eta}{6}q - \frac{\eta}{3} u^2 - \mu^2 u_{xx},u_t\right) \\
		&= \left(-\frac{\eta}{6}q - \frac{\eta}{3} u^2 - \mu^2 u_{xx},\partial_x\left( -\frac{\eta}{6}q - \frac{\eta}{3} u^2 - \mu^2 u_{xx} \right)\right) \\
		&= 0,
	\end{align*}
	which leads to \eqref{QAVECL} and completes the proof.
\end{proof}

\begin{thm}
	Under the consistent initial condition \eqref{CIC}, the QAV system \eqref{QAVsystem} is equivalent to the original model \eqref{Hamilton-KdV}, which conserves the original energy conservation law
	\begin{equation}
		\mathcal{H}(t) \equiv \mathcal{H}(0), \quad \forall\; t. \label{OECL}
	\end{equation} 
\end{thm}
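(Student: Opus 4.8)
The plan is to leverage the strong pointwise invariant \eqref{QAVCL} already established in the previous theorem. The crucial observation is that the consistent initial condition \eqref{CIC} sets the right-hand side of \eqref{QAVCL} to zero, namely $q(x,0)-\big(u(x,0)\big)^2 \equiv 0$. Hence the invariant forces $q(x,t)=\big(u(x,t)\big)^2$ for all $x$ and all $t$, not merely at the initial time. This upgrade from an initial identity to a persistent identity is the linchpin of the whole argument, and I regard it as the only place where any care is needed; everything downstream is substitution.

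First I would substitute $q=u^2$ into the first equation of the QAV system \eqref{QAVsystem}. Combining the two cubic contributions via $-\tfrac{\eta}{6}-\tfrac{\eta}{3}=-\tfrac{\eta}{2}$ collapses the right-hand side to $\partial_x\big(-\tfrac{\eta}{2}u^2-\mu^2 u_{xx}\big)$, and expanding the derivative yields $-\eta u u_x-\mu^2 u_{xxx}$. This is exactly $u_t+\eta u u_x+\mu^2 u_{xxx}=0$, i.e. the original equation \eqref{KdVeq}, equivalently the Hamiltonian form \eqref{Hamilton-KdV}. The second equation $q_t=2uu_t$ is then automatically consistent with $q=u^2$, so no extra constraint is generated. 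This establishes the claimed equivalence of \eqref{QAVsystem} with \eqref{Hamilton-KdV} under \eqref{CIC}.

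Next I would reconcile the two energies. Evaluating the modified energy \eqref{modified-energy} along the constraint $q=u^2$ gives
\begin{equation*}
	\mathcal{E}[u,u^2]=-\frac{\eta}{6}(u,u^2)+\frac{\mu^2}{2}\|u_x\|^2=-\frac{\eta}{6}\int_a^b u^3\,\mathrm{d}x+\frac{\mu^2}{2}\|u_x\|^2=\mathcal{H}[u],
\end{equation*}
so the modified energy coincides with the original Hamiltonian \eqref{Hamilton-energy} precisely on the set where the quadratic auxiliary variable is consistent. Because $q(x,t)=\big(u(x,t)\big)^2$ holds for every $t$, this coincidence is valid along the entire trajectory, giving $\mathcal{H}(t)=\mathcal{E}(t)$ for all $t$.

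Finally I would invoke the modified energy conservation \eqref{QAVECL} from the previous theorem, which asserts $\mathcal{E}(t)\equiv\mathcal{E}(0)$. Chaining the identities just derived yields $\mathcal{H}(t)=\mathcal{E}(t)\equiv\mathcal{E}(0)=\mathcal{H}(0)$, which is the desired original energy conservation law \eqref{OECL}. The argument is thus entirely a corollary of Theorem~2.1 once the consistent initial condition is used to eliminate the constant in \eqref{QAVCL}; the expected obstacle is not analytical difficulty but simply making explicit that \eqref{CIC} propagates the constraint $q=u^2$ for all time rather than only at $t=0$.
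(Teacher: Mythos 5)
Your proposal is correct and follows essentially the same route as the paper: combine the consistent initial condition \eqref{CIC} with the pointwise invariant \eqref{QAVCL} to propagate $q=u^2$ for all time, deduce equivalence with \eqref{Hamilton-KdV}, identify $\mathcal{E}$ with $\mathcal{H}$ along the trajectory, and conclude via \eqref{QAVECL}. The paper's proof is just a terser statement of the same argument, so your additional substitution and energy-reconciliation details are merely an expanded version of what the authors leave implicit.
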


\begin{proof}
	Combining the consistent initial condition \eqref{CIC} with the conservative property \eqref{QAVCL} leads to the auxiliary variable relation \eqref{auxiliary-variable}, which implies that the QAV system \eqref{QAVsystem} is equivalent to the original model \eqref{Hamilton-KdV} and thus \eqref{OECL} holds. This completes the proof.
\end{proof}

\begin{rmk}
	It is worth noting that the QAV reformulated system possesses two quadratic strong invariants, namely, $q-u^2$ and the modified energy $\mathcal{E}.$ Under the consistent initial condition \eqref{CIC}, the QAV system satisfies the auxiliary variable relation \eqref{auxiliary-variable} and the original energy conservation law \eqref{OECL}. Therefore, for the new system, the auxiliary variable relation can be regarded as a weak property, while the original energy reduces a weak invariant \cite{2006Geometric}.
\end{rmk}

\begin{rmk}
	Similar to the IEQ or SAV approaches, the reformulated system satisfies a modified quadratic energy law. But in addition, our QAV reformulation must satisfy that the introduced variable is a quadratic function. In the next section, we will show that the QAV reformulation will provide an elegant platform for developing arbitrarily high-order structure-preserving algorithms that conserve the original energy exactly.
\end{rmk}

\begin{rmk}
	Note that the choice of the quadratic auxiliary variable is not unique. For some complex energy functionals, especially the case of high degree polynomials, we may need to introduce more quadratic auxiliary variables, which will be further studied in our future work. 
\end{rmk}

\section{High-order energy-preserving schemes based on the QAV reformulation}
Traditionally, it is challenging to  develop energy-preserving numerical approximation by using RK method directly. In this section, we derive general RK methods in time for the QAV reformulated system \eqref{QAVsystem}. Among them, a class of RK methods that satisfies the symplectic condition is rigorously proved to preserve the original energy conservation law exactly. Thus we propose a new class of high-order energy-preserving RK methods for the KdV equation based on its QAV reformulation.

Let $\Delta t$ be a time step size and set $t_n = n \Delta t$ for $0 \leq n \leq N_t$ with $T = N_t \Delta t$.  Let $u^n$ denote the numerical approximation to $u(\cdot, t)$ at $t = t_n$ for any function $u$. We now apply an $s$-stage RK method to the QAV system \eqref{QAVsystem}, then the QAV-RK scheme reads
\begin{scm}[$s$-stage QAV-RK Method] \label{scheme:QAV-RK}
	Let $a_{ij}$, $b_i$, $c_i$ $(i,j = 1,\cdots,s)$ be a set of RK coefficients. For given $(u^n, q^n)$, the following intermediate values are first calculated by
	\begin{equation}\label{TDIV}
		\begin{cases}
			U_i =  u^n +  \Delta t \sum\limits_{j=1}^s a_{ij} k_j, \\
			Q_i = q^n +  \Delta t \sum\limits_{j=1}^s a_{ij} l_j, \\
			k_i =  \partial_x \left(-\frac{\eta}{6} Q_i - \frac{\eta}{3} U_i^2 - \mu^2 \partial_{xx} U_i \right), \\
			l_i = 2 U_i k_i.
		\end{cases}
	\end{equation}
	Then $(u^{n+1}, q^{n+1})$ is updated via
	\begin{align}
		u^{n+1} = u^n +  \Delta t \sum\limits_{i=1}^s b_i k_i ,\label{TD-u} \\
		q^{n+1} = q^n +  \Delta t \sum\limits_{i=1}^s b_i l_i.\label{TD-q}
	\end{align}
\end{scm}

\noindent Note that \textbf{Scheme \ref{scheme:QAV-RK}} is a time semi-discrete system, where the variables $U_i, Q_i, k_i, l_i, u^{n+1}$ and $q^{n+1}$ are functions of the spatial variable $x$. As we know, all RK methods preserve linear invariants, while those that satisfy the symplectic condition conserve all quadratic invariants \cite{Cooper1987Stability}. Therefore, for general QAV-RK methods, we have the following theorem for the structure-preserving properties.

\begin{thm}\label{thm:SD-CL}
	If the coefficients of a QAV-RK method satisfy the symplectic condition
	\begin{equation}\label{RK-symplectic-condition}
		b_i a_{i j} + b_j a_{j i} = b_i b_j,\quad \forall~i,j = 1,\cdots,s,
	\end{equation}
	then under periodic boundary conditions, it satisfies the following conservative properties
	\begin{align}
		& \big(u^{n+1},1\big) = \big(u^n,1\big),\label{QAVRKMCL}\\
		& q^{n+1}-\big(u^{n+1}\big)^2 = q^n-\big(u^n\big)^2, \label{QAVRKCL}\\
		& \mathcal{E}^{n+1} = \mathcal{E}^n, \label{QAVRKECL}
	\end{align}
	where $\mathcal{E}^n = -\frac{\eta}{6} (u^n,q^n) + \frac{\mu^2}{2} \|\partial_x u^n\|^2.$
\end{thm}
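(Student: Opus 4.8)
The plan is to prove the three identities separately, observing that \eqref{QAVRKMCL} is a \emph{linear} invariant preserved by every Runge--Kutta method, whereas \eqref{QAVRKCL} and \eqref{QAVRKECL} are \emph{quadratic} invariants of the reformulated system whose preservation rests on the symplectic condition \eqref{RK-symplectic-condition}, exactly as in the classical result of \cite{Cooper1987Stability} transplanted to the present functional setting. Throughout I would abbreviate the stage values $(U_i,Q_i)$ and stage derivatives $(k_i,l_i)$, repeatedly use the backward stage relations $u^n = U_i - \Delta t\sum_j a_{ij}k_j$ and $q^n = Q_i - \Delta t\sum_j a_{ij}l_j$ read off from \eqref{TDIV}, and invoke periodicity to discard boundary terms. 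For the mass law \eqref{QAVRKMCL} I would simply pair \eqref{TD-u} with the constant function $1$: since each $k_i=\partial_x(\cdots)$ is a spatial derivative, $(k_i,1)=0$ under periodic boundary conditions, so $(u^{n+1},1)=(u^n,1)$ with no appeal to \eqref{RK-symplectic-condition}.

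For the pointwise invariant \eqref{QAVRKCL} I would work pointwise in $x$. Squaring \eqref{TD-u} and subtracting \eqref{TD-q} gives
\[
q^{n+1}-(u^{n+1})^2 = q^n-(u^n)^2 + \Delta t\sum_i b_i l_i - 2\Delta t\, u^n\sum_i b_i k_i - \Delta t^2\Big(\sum_i b_i k_i\Big)^2.
\]
Substituting $l_i=2U_ik_i$ and $U_i=u^n+\Delta t\sum_j a_{ij}k_j$ cancels the $O(\Delta t)$ terms and leaves the purely quadratic remainder $\Delta t^2\sum_{i,j}(2b_ia_{ij}-b_ib_j)\,k_ik_j$. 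Since $k_ik_j=k_jk_i$, symmetrizing in $i,j$ and invoking \eqref{RK-symplectic-condition} makes this vanish identically, which establishes \eqref{QAVRKCL} at every point $x$.

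The energy law \eqref{QAVRKECL} is the crux. Here the decisive structural fact is a discrete stage identity: writing $g_i=-\frac{\eta}{6}Q_i-\frac{\eta}{3}U_i^2-\mu^2\partial_{xx}U_i$ so that $k_i=\partial_x g_i$, one finds
\[
\Big(-\tfrac{\eta}{6}Q_i-\mu^2\partial_{xx}U_i,\; k_i\Big) + \Big(-\tfrac{\eta}{6}U_i,\; l_i\Big) = (g_i,\partial_x g_i) = 0,
\]
the last equality by skew-adjointness of $\partial_x$ under periodicity; this is the exact discrete analogue of the first-integral condition $\left(\frac{\delta\mathcal{E}}{\delta z},\,\mathcal{B}\frac{\delta\mathcal{E}}{\delta z}\right)=0$ underlying the continuous modified-energy law \eqref{QAVECL}. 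I would then expand $\mathcal{E}^{n+1}-\mathcal{E}^n$, which is bilinear in the increments $u^{n+1}-u^n$ and $q^{n+1}-q^n$, substitute the backward stage relations to eliminate $u^n,q^n$ from the cross terms, use the stage identity above to annihilate the $O(\Delta t)$ contribution, and collect the surviving $O(\Delta t^2)$ terms into a form $\Delta t^2\sum_{i,j}(b_ib_j-2b_ia_{ij})(\cdots)$; symmetrization together with \eqref{RK-symplectic-condition} then forces the cancellation, yielding $\mathcal{E}^{n+1}=\mathcal{E}^n$.

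The main obstacle I anticipate lies in the bookkeeping of the energy computation rather than in any single hard estimate. The term $\frac{\mu^2}{2}\|\partial_x u^n\|^2$ must be treated by integration by parts to match the $\partial_{xx}U_i$ that appears inside $g_i$, and one must carefully verify that the mixed bilinear term $-\frac{\eta}{6}(u,q)$ generates two distinct cross contributions which recombine, through $l_i=2U_ik_i$, into precisely the $-\frac{\eta}{3}U_i^2$ piece of $g_i$. Assembling these three fragments into the single pairing $(g_i,\partial_x g_i)$ — so that skew-adjointness may be applied — is the delicate step; once that identity is secured, the symplectic cancellation proceeds exactly as in the proof of \eqref{QAVRKCL}.
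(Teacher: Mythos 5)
Your proposal is correct and follows essentially the same route as the paper: mass conservation from $(k_i,1)=0$, the quadratic invariants via the backward stage substitutions $u^n=U_i-\Delta t\sum_j a_{ij}k_j$ together with the symplectic condition \eqref{RK-symplectic-condition}, and the final cancellation through the pairing $(g_i,\partial_x g_i)=0$ with $g_i=-\frac{\eta}{6}Q_i-\frac{\eta}{3}U_i^2-\mu^2\partial_{xx}U_i$. The paper merely organizes the energy step by computing the increments of $(u,q)$ and $\|\partial_x u\|^2$ separately before recombining, which is the same bookkeeping you describe.
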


\begin{proof}
	Under periodic boundary conditions, it is readily to obtain from the third equation of the system \eqref{TDIV} that $(k_i,1) = 0.$ Taking the inner product of \eqref{TD-u} with $1$, we have 
	\begin{equation*}
		\big(u^{n+1},1\big) = \big(u^n,1\big) + \Delta t \sum\limits_{i=1}^s b_i (k_i,1) = \big(u^n,1\big).
	\end{equation*}
	
	According to Eq. \eqref{TD-u}, we can derive
	\begin{equation}\label{u2diff1}
		\big(u^{n+1}\big)^2 - \big(u^n\big)^2 = 2\Delta t\sum\limits_{i=1}^s b_i k_i u^n + \Delta t ^2 \sum\limits_{i,j=1}^s b_i b_j k_i  k_j.
	\end{equation}
	Applying $u^n = U_i -  \Delta t \sum\limits_{j=1}^s a_{ij} k_j$ to the right hand side of \eqref{u2diff1}, we deduce
	\begin{equation}\label{u2diff2}
		\big(u^{n+1}\big)^2 - \big(u^n\big)^2 = 2\Delta t\sum\limits_{i=1}^s b_i k_i U_i,
	\end{equation}
	where $\sum\limits_{i,j=1}^s b_i a_{ij} k_i k_j = \sum\limits_{i,j=1}^s b_j a_{ji} k_i k_j$ and the condition \eqref{RK-symplectic-condition} were used. Noticing that 
	\begin{equation*}
		q^{n+1} - q^n = \Delta t \sum\limits_{i=1}^s b_i l_i = 2\Delta t \sum\limits_{i=1}^s b_i U_i k_i,
	\end{equation*}
	thus we obtain
	\begin{equation*}
		\big(u^{n+1}\big)^2 - \big(u^n\big)^2  = q^{n+1} - q^n,
	\end{equation*}
	which implies \eqref{QAVRKCL}.
	
	Similar to Eq. \eqref{u2diff2}, we can deduce from \textbf{Scheme \ref{scheme:QAV-RK}}
	\begin{align}
		& (u^{n+1},q^{n+1}) - (u^n,q^n) = \Delta t\sum\limits_{i=1}^s b_i (k_i,Q_i) + \Delta t\sum\limits_{i=1}^s b_i (l_i,U_i) = \Delta t\sum\limits_{i=1}^s b_i (k_i,Q_i + 2 U_i^2), \label{QAVRKE1}\\
		& \|\partial_x u^{n+1}\|^2 - \|\partial_x u^n\|^2 = 2\Delta t\sum\limits_{i=1}^s b_i (\partial_x k_i,\partial_x U_i) =  -2\Delta t\sum\limits_{i=1}^s b_i ( k_i,\partial_{xx} U_i). \label{QAVRKE2}
	\end{align}
	Multiplying \eqref{QAVRKE1} and \eqref{QAVRKE2} by $-\frac{\eta}{6}$ and $\frac{\mu^2}{2}$, respectively, then adding the results and noticing $k_i =  \partial_x \left(-\frac{\eta}{6} Q_i - \frac{\eta}{3} U_i^2 - \mu^2 \partial_{xx} U_i \right)$, we have
	\begin{equation*}
		\mathcal{E}^{n+1} - \mathcal{E}^n = \Delta t\sum\limits_{i=1}^s b_i (k_i,-\frac{\eta}{6} Q_i - \frac{\eta}{3} U_i^2 - \mu^2 \partial_{xx} U_i) = 0,
	\end{equation*}
	which leads to the modified energy conservation law. The proof is completed.
\end{proof}

\begin{thm}\label{thm:original-energy-conservation}
	Under the consistent initial condition $q^0 = \big(u^0\big)^2$, the QAV-RK method that satisfies the symplectic condition conserves the original energy conservation law
	\begin{equation}\label{QAVRK-OECL}
		\mathcal{H}^{n} \equiv \mathcal{H}^0, \quad \forall\; n,
	\end{equation}
	where $\mathcal{H}^n = -\frac{\eta}{6} \big((u^n)^3,1\big) + \frac{\mu^2}{2} \|\partial_x u^n\|^2.$
\end{thm}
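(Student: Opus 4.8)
The plan is to leverage the two discrete quadratic invariants already established in Theorem \ref{thm:SD-CL} and then reduce the original energy to the modified energy along the numerical trajectory. First I would use the discrete conservation law \eqref{QAVRKCL}, which asserts that $q^{n+1} - (u^{n+1})^2 = q^n - (u^n)^2$, together with the consistent initialisation $q^0 = (u^0)^2$. Iterating from $n=0$ gives $q^n - (u^n)^2 = q^0 - (u^0)^2 = 0$, whence $q^n = (u^n)^2$ for every $n$. This is precisely the discrete counterpart of the continuous argument: the consistent initial data combined with the strong (pointwise) invariance of $q-u^2$ forces the auxiliary variable to remain exactly the square of $u$ at each time level, not merely approximately.

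Next I would exploit this identity to collapse the modified energy onto the original one. Substituting $q^n = (u^n)^2$ into $\mathcal{E}^n = -\frac{\eta}{6}(u^n,q^n) + \frac{\mu^2}{2}\|\partial_x u^n\|^2$ and noting that $\big(u^n,(u^n)^2\big) = \big((u^n)^3,1\big)$, one obtains $\mathcal{E}^n = -\frac{\eta}{6}\big((u^n)^3,1\big) + \frac{\mu^2}{2}\|\partial_x u^n\|^2 = \mathcal{H}^n$ for all $n$. Thus, along the numerical solution produced by a symplectic QAV-RK method started from consistent data, the modified energy and the original energy are literally the same functional.

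Finally I would invoke the modified-energy conservation \eqref{QAVRKECL} from Theorem \ref{thm:SD-CL}, namely $\mathcal{E}^{n+1} = \mathcal{E}^n$, which by induction yields $\mathcal{E}^n = \mathcal{E}^0$. Chaining the three facts gives $\mathcal{H}^n = \mathcal{E}^n = \mathcal{E}^0 = \mathcal{H}^0$, which is exactly \eqref{QAVRK-OECL}.

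I do not anticipate a genuine obstacle, since the statement is essentially a corollary of Theorem \ref{thm:SD-CL}: all the analytic effort, in particular the manipulation exploiting the symplectic condition \eqref{RK-symplectic-condition}, has already been expended there. The single point that must be treated with care is the propagation step, where one must confirm that the discrete consistency $q^n = (u^n)^2$ is maintained \emph{exactly}; this is guaranteed by the strong invariant \eqref{QAVRKCL} rather than holding only up to truncation error. Everything else is a routine algebraic identification of the two energy functionals.
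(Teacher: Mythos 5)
Your argument is correct and is essentially identical to the paper's own proof: both use the discrete invariant \eqref{QAVRKCL} together with the consistent initialisation to propagate $q^n = (u^n)^2$ exactly, identify $\mathcal{E}^n$ with $\mathcal{H}^n$, and then invoke the modified energy conservation \eqref{QAVRKECL}. No further comment is needed.
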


\begin{proof}
	According to  the property \eqref{QAVRKCL} and the consistent initial condition, we obtain
	\begin{equation}
		q^n = \big(u^n\big)^2, \quad \forall\; n,
	\end{equation}
	which implies
	\begin{equation}\label{energy-relationship}
		\mathcal{E}^n = -\frac{\eta}{6} \left(u^n,\big(u^n\big)^2\right) + \frac{\mu^2}{2} \|\partial_x u^n\|^2 = \mathcal{H}^n.
	\end{equation}
	Combining the modified energy conservation law \eqref{QAVRKECL} and Eq. \eqref{energy-relationship} leads to the original energy conservation law \eqref{QAVRK-OECL}. This completes the proof.	
\end{proof}

\begin{rmk}
	According to \textbf{Theorem \ref{thm:original-energy-conservation}}, all QAV-RK methods that satisfy the symplectic condition are to preserve the original energy conservation law. For convenience, this new class of energy-preserving schemes is called QAV-EPRK methods. Due to $q^n = \big(u^n\big)^2,$ the QAV-EPRK method is equivalent to the following scheme.
	\begin{scm}[$s$-stage QAV-EPRK Method] \label{scheme:QAV-EPRK}
		RK coefficients $a_{ij}$, $b_i$, $c_i$ $(i,j = 1,\cdots,s)$ satisfy the symplectic condition \eqref{RK-symplectic-condition}. For given $u^n$, the following intermediate values are first calculated by
		\begin{equation}\label{TDIV-QAV-EPRK}
			\begin{cases}
				U_i =  u^n +  \Delta t \sum\limits_{j=1}^s a_{ij} k_j, \\
				Q_i = \big(u^n\big)^2 +  2\Delta t \sum\limits_{j=1}^s a_{ij} U_j k_j, \\
				k_i =  \partial_x \left(-\frac{\eta}{6} Q_i - \frac{\eta}{3} U_i^2 - \mu^2 \partial_{xx} U_i \right).
			\end{cases}
		\end{equation}
		Then $u^{n+1}$ is updated via
		\begin{equation}
			u^{n+1} = u^n +  \Delta t \sum\limits_{i=1}^s b_i k_i.
		\end{equation}
	\end{scm}
\noindent Different from IEQ and SAV approaches \cite{Yang2017Numerical,Shen2018The,Jiang2020Arbitrarily,Zhang2020Novel,Jiang2021Explicit}, our QAV-EPRK methods can eliminate auxiliary variables and keep the original energy conservation structure. Different from the existing high-order energy-preserving algorithms \cite{Quispel2008A,Brugnano2010Hamiltonian,Hairer2010Energy,Tang2012Time,Wang2012A,Li2016A}, the QAV-EPRK methods do not involve integrals and allow a class of RK methods to be used directly to produce energy-preserving algorithms. Furthermore, the proposed numerical strategy in this paper is also applicable to some non-polynomial cases, which will be further discussed in our future work.
\end{rmk}

\begin{rmk}
	It is well known that the Gaussian collocation methods satisfy the symplectic condition \eqref{RK-symplectic-condition}, so they can be used to develop arbitrarily high-order energy-preserving algorithms based on our theory. Specially, for the $1$-stage Gaussian collocation method, we can deduce the corresponding QAV-EPRK system
	\begin{equation}\label{1sQAVRK}
		\begin{cases}
			U_1 =  u^n +  \frac{\Delta t}{2}  k_1, \\
			Q_1 = \big(u^n\big)^2 +  \Delta t U_1 k_1, \\
			k_1 =  \partial_x \left(-\frac{\eta}{6} Q_1 - \frac{\eta}{3} U_1^2 - \mu^2 \partial_{xx} U_1 \right),\\
			u^{n+1} = u^n +  \Delta t k_1.
		\end{cases}
	\end{equation}
	Eliminating $U_1, Q_1, k_1,$ we have 
	\begin{equation}\label{AVF}
		\frac{u^{n+1} - u^n}{\Delta t} = \partial_x \left(-\frac{\eta}{6} \Big( (u^n)^2 + u^n u^{n+1} + (u^{n+1})^2 \Big) - \mu^2 \partial_{xx} \frac{u^n + u^{n+1}}{2} \right).
	\end{equation}
	It is readily to show that the scheme \eqref{AVF} can be also obtained by applying the AVF method for the original model \eqref{Hamilton-KdV} \cite{Quispel2008A,Gong2014Some}. Therefore, for the KdV equation, the AVF method is a special case of our proposed methods. In addition, according to the book \cite{2006Geometric}, our proposed QAV-EPRK schemes based on the Gaussian collocation coefficients are naturally symmetric.
\end{rmk}

\section{Fully discrete energy-preserving schemes}
In this section, we employ the Fourier pseudo-spectral method in space for \textbf{Scheme \ref{scheme:QAV-EPRK}} to arrive at fully discrete QAV-EPRK schemes, which are shown to conserve the corresponding energy conservation law in the fully discrete level.

To make the paper self-explanatory, we briefly introduce the following notations. Let $N$ be a positive even integer. We denote the spatial domain $\Omega = [a,b]$, which is uniformly partitioned with mesh size $h = (b-a)/N$ into
\begin{equation*}
	\Omega_{h} =
	\left\{x_j | x_j = a + j h, ~ j = 0, 1, \cdots, N-1 \right\}.
\end{equation*}
Let $V_{h} = \big\{u|u=\{u_j | u_j = u(x_j), x_j \in \Omega_{h}\} \big\}$ be the space of grid functions on $\Omega_{h}$. Note that an element of $V_{h}$ can be regarded as a vector, and its basic rules of operation are the same as the vector, unless otherwise stated. For any two grid functions $u,v \in V_{h}$, define the discrete inner product and norm as follows
\begin{equation*}
	(u,v)_{h} = h \sum\limits_{j = 0}^{N-1} u_j v_j, \quad \|u\|_{h} = \sqrt{(u,u)_{h}}.
\end{equation*}

As we know, the Fourier pseudo-spectral method has been widely used to develop the spatial structure-preserving discretization. Here we only introduce $D_1$ to denote the first-order Fourier differential matrix and omit the details due to save space. Interested readers are referred to \cite{Chen2001Multi,Gong2014Some} for details. Applying the Fourier pseudo-spectral method to \textbf{Scheme \ref{scheme:QAV-RK}}, we obtain the following fully discrete scheme.

\begin{scm}[Fully Discrete QAV-RK Method] \label{scheme:RD-QAV-RK}
	Let $a_{ij}$, $b_i$, $c_i$ $(i,j = 1,\cdots,s)$ be a set of RK coefficients. For given $u^n, q^n \in V_{h}$, the following intermediate values are first calculated by
	\begin{equation}\label{FDIV}
		\begin{cases}
			U_i =  u^n +  \Delta t \sum\limits_{j=1}^s a_{ij} k_j, \\
			Q_i = q^n +  \Delta t \sum\limits_{j=1}^s a_{ij} l_j, \\
			k_i =  D_1 \left(-\frac{\eta}{6} Q_i - \frac{\eta}{3} U_i^2 - \mu^2 D_1^2 U_i \right), \\
			l_i = 2 U_i k_i,
		\end{cases}
	\end{equation}
	where $U_i, Q_i, k_i, l_i \in V_{h},$ and $U_i^2, U_i k_i \in V_{h}$ represent two vectors with the elements
	\begin{equation*}
		(U_i^2)_j = (U_i)_j^2, \quad (U_i k_i)_j = (U_i)_j (k_i)_j.
	\end{equation*}
	Then $u^{n+1}, q^{n+1} \in V_{h}$ are updated via
	\begin{align}
		u^{n+1} = u^n +  \Delta t \sum\limits_{i=1}^s b_i k_i ,\label{FD-u} \\
		q^{n+1} = q^n +  \Delta t \sum\limits_{i=1}^s b_i l_i.\label{FD-q}
	\end{align}
\end{scm}

Analogous to the semi-discrete scheme, we have the following theorems for the fully discrete scheme.

\begin{thm}\label{thm:FD-CL}
	If RK coefficients satisfy the symplectic condition \eqref{RK-symplectic-condition}, then the fully discrete QAV-RK scheme satisfies the following conservative properties
	\begin{align}
		& \big(u^{n+1},1\big)_h = \big(u^n,1\big)_h,\label{QAVRKMCL-FD}\\
		& q^{n+1}-\big(u^{n+1}\big)^2 = q^n-\big(u^n\big)^2, \label{QAVRKCL-FD}\\
		& \mathcal{E}^{n+1} = \mathcal{E}^n, \label{QAVRKECL-FD}
	\end{align}
	where $\mathcal{E}^n = -\frac{\eta}{6} (u^n,q^n)_h + \frac{\mu^2}{2} \|D_1 u^n\|_h^2.$
\end{thm}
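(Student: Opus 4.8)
The plan is to transcribe the semi-discrete argument of \textbf{Theorem \ref{thm:SD-CL}} into the discrete setting, since all three identities follow algebraically from the scheme together with a single structural property of the spatial discretization. The only fact I need from the Fourier pseudo-spectral method is that the first-order differential matrix $D_1$ is skew-symmetric with respect to the discrete inner product, namely $(D_1 u, v)_h = -(u, D_1 v)_h$ for all $u, v \in V_h$, and hence $(D_1 u, 1)_h = 0$. These are standard exact properties of the Fourier differentiation matrix on a uniform periodic grid, which I would quote from \cite{Chen2001Multi,Gong2014Some}.

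For the mass conservation \eqref{QAVRKMCL-FD}, I would pair \eqref{FD-u} with the constant grid function in $(\cdot,\cdot)_h$; since each $k_i = D_1(\cdots)$, skew-symmetry gives $(k_i,1)_h = 0$, so the weighted sum drops out and $(u^{n+1},1)_h = (u^n,1)_h$. The quadratic relation \eqref{QAVRKCL-FD} does not touch the spatial structure at all: squaring \eqref{FD-u} pointwise, substituting $u^n = U_i - \Delta t\sum_j a_{ij}k_j$, and invoking the symplectic condition \eqref{RK-symplectic-condition} to cancel the $O(\Delta t^2)$ terms yields $(u^{n+1})^2 - (u^n)^2 = 2\Delta t\sum_i b_i U_i k_i$ componentwise. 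Comparing this with \eqref{FD-q} and $l_i = 2U_i k_i$ gives $q^{n+1} - q^n = (u^{n+1})^2 - (u^n)^2$. This step is word-for-word the semi-discrete computation.

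For the modified energy \eqref{QAVRKECL-FD}, I would treat the two increments separately. The same symplectic-condition manipulation reduces the cross term to $(u^{n+1},q^{n+1})_h - (u^n,q^n)_h = \Delta t\sum_i b_i(k_i, Q_i + 2U_i^2)_h$, while the gradient term collapses to $\|D_1 u^{n+1}\|_h^2 - \|D_1 u^n\|_h^2 = 2\Delta t\sum_i b_i(D_1 k_i, D_1 U_i)_h$. Here is the one place the spatial structure enters: skew-symmetry rewrites the gradient contribution as $-2\Delta t\sum_i b_i(k_i, D_1^2 U_i)_h$. Combining with the weights $-\tfrac{\eta}{6}$ and $\tfrac{\mu^2}{2}$ and recognizing the definition of $k_i$ from \eqref{FDIV} gives $\mathcal{E}^{n+1} - \mathcal{E}^n = \Delta t\sum_i b_i(k_i, -\tfrac{\eta}{6}Q_i - \tfrac{\eta}{3}U_i^2 - \mu^2 D_1^2 U_i)_h = 0$.

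The main obstacle --- really the only point requiring care --- is the discrete integration-by-parts step: the entire argument hinges on $D_1$ being \emph{exactly} skew-symmetric in $(\cdot,\cdot)_h$ and annihilating constants, so that no truncation error creeps into the increments. For the Fourier differentiation matrix on a periodic grid these hold exactly rather than approximately, which is precisely what lets the fully discrete scheme inherit all three conservation laws verbatim from the semi-discrete case.
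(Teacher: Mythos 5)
Your proposal is correct and follows exactly the route the paper intends: the paper omits this proof, remarking only that it parallels the semi-discrete argument of \textbf{Theorem \ref{thm:SD-CL}}, and your transcription — with the exact skew-symmetry of $D_1$ in $(\cdot,\cdot)_h$ and $(D_1 v,1)_h=0$ as the only new ingredients — is precisely that parallel argument carried out.
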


\begin{thm}\label{thm:original-energy-conservation-FD}
	Under the consistent initial condition $q^0 = \big(u^0\big)^2$, the fully discrete QAV-RK method that satisfies the symplectic condition conserves the original energy conservation law
	\begin{equation}\label{QAVRK-OECL-FD}
		\mathcal{H}^{n} \equiv \mathcal{H}^0, \quad \forall\; n,
	\end{equation}
	where $\mathcal{H}^n = -\frac{\eta}{6} \big((u^n)^3,1\big)_h + \frac{\mu^2}{2} \|D_1 u^n\|_h^2.$
\end{thm}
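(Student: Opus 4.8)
The plan is to imitate the semi-discrete argument of \textbf{Theorem \ref{thm:original-energy-conservation}}, replacing the continuous inner products and norms by their discrete counterparts $(\cdot,\cdot)_h$ and $\|\cdot\|_h$, and invoking \textbf{Theorem \ref{thm:FD-CL}} in place of \textbf{Theorem \ref{thm:SD-CL}}. First I would exploit the fully discrete quadratic invariant \eqref{QAVRKCL-FD}, which states that $q^{n+1}-(u^{n+1})^2 = q^n-(u^n)^2$ holds elementwise in $V_h$. Starting from the consistent initial data $q^0 = (u^0)^2$, a straightforward induction on $n$ then yields $q^n = (u^n)^2$ for every $n$, where the square is again understood componentwise.

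The second step is to feed this identity into the discrete modified energy. Substituting $q^n = (u^n)^2$ into $\mathcal{E}^n = -\frac{\eta}{6}(u^n,q^n)_h + \frac{\mu^2}{2}\|D_1 u^n\|_h^2$, and using that the discrete inner product is a weighted nodewise sum, I would observe that $(u^n,(u^n)^2)_h = h\sum_{j} (u^n)_j^3 = \big((u^n)^3,1\big)_h$. Hence $\mathcal{E}^n = \mathcal{H}^n$ for all $n$, which is exactly the discrete analogue of the semi-discrete relation \eqref{energy-relationship}.

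Finally, combining this equality with the fully discrete modified energy conservation law \eqref{QAVRKECL-FD}, namely $\mathcal{E}^{n+1}=\mathcal{E}^n$, immediately gives $\mathcal{H}^{n+1}=\mathcal{H}^n$ for every $n$, and therefore $\mathcal{H}^n \equiv \mathcal{H}^0$, which is the desired conservation law \eqref{QAVRK-OECL-FD}.

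I do not expect a genuine obstacle here: once \textbf{Theorem \ref{thm:FD-CL}} is in hand, the argument is structurally identical to the semi-discrete case. The only point requiring a moment's care is verifying that the cubic-energy rewriting $(u^n,(u^n)^2)_h = \big((u^n)^3,1\big)_h$ is legitimate at the discrete level, but this is immediate because the discrete inner product acts pointwise, so no summation-by-parts or further manipulation of $D_1$ is needed. The heavier lifting, namely preserving the quadratic invariant $q-u^2$ and the modified energy under the Fourier pseudo-spectral discretization, has already been absorbed into \textbf{Theorem \ref{thm:FD-CL}}.
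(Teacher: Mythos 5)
Your proposal is correct and follows exactly the route the paper intends: the authors explicitly omit this proof, stating it is analogous to the semi-discrete case (\textbf{Theorem \ref{thm:original-energy-conservation}}), which is precisely the argument you reproduce using \eqref{QAVRKCL-FD}, the pointwise identity $(u^n,(u^n)^2)_h=\big((u^n)^3,1\big)_h$, and \eqref{QAVRKECL-FD}.
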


\begin{rmk}
	As the proofs of \textbf{Theorem \ref{thm:FD-CL}} and \textbf{Theorem \ref{thm:original-energy-conservation-FD}} are similar to the semi-discrete counterparts in \textbf{Theorems \ref{thm:SD-CL}} and \textbf{\ref{thm:original-energy-conservation}}, we omit the details. Similarly, noticing that $q^n = \big(u^n\big)^2,$ \textbf{Scheme \ref{scheme:RD-QAV-RK}} with the symplectic condition \eqref{RK-symplectic-condition} can be equivalently written into the following fully discrete QAV-EPRK method.
\end{rmk}
	\begin{scm}[Fully discrete QAV-EPRK Method] \label{scheme:QAV-EPRK-FD}
		RK coefficients $a_{ij}$, $b_i$, $c_i$ $(i,j = 1,\cdots,s)$ satisfy the symplectic condition \eqref{RK-symplectic-condition}. For given $u^n \in V_h$, the following intermediate values are first calculated by
		\begin{equation}\label{TDIV-QAV-EPRK-FD}
			\begin{cases}
				U_i =  u^n +  \Delta t \sum\limits_{j=1}^s a_{ij} k_j, \\
				Q_i = \big(u^n\big)^2 +  2\Delta t \sum\limits_{j=1}^s a_{ij} U_j k_j, \\
				k_i =  D_1 \left(-\frac{\eta}{6} Q_i - \frac{\eta}{3} U_i^2 - \mu^2 D_1^2 U_i \right).
			\end{cases}
		\end{equation}
		Then $u^{n+1}\in V_h$ is updated via
		\begin{equation}
			u^{n+1} = u^n +  \Delta t \sum\limits_{i=1}^s b_i k_i.
		\end{equation}
	\end{scm}

\begin{rmk} 
The theoretical analysis of high-order schemes is not trivial, especially for arbitrarily high-order schemes based on RK method. It is a very interesting and complicated project that deserves to be investigated in the future work.
In literature, there are a few results on the RK convergence analysis for the nonlinear Schr\"{o}dinger equation \cite{Feng2021SINU} and gradient flow models \cite{Akrivis2019Energy}. 
We emphasize that one can seek their analytical techniques to analyze our proposed schemes, which will be studied in a sequel.
\end{rmk}

\section{Practically structure-preserving implementation}\label{sec:EIP}
As far as we know, most of structure-preserving algorithms are fully implicit for general conservative systems, which require a nonlinear iteration to solve them. In particular, to maintain the conservative property numerically, the iteration error needs to reach the machine accuracy, which makes the calculation cost extremely expensive. Even so, the error of the conserved quantity in a long time computing is still difficult to stabilize in the machine precision because of the accumulation of machine errors (e.g., see \cite{Gong2014Some,GongCiCP2016An}). In order to improve the computational efficiency of structure-preserving algorithms, we here propose a practically structure-preserving iterative technique, which is inspired by the works \cite{BrugnanoCPC2012A,Cai2020An}.

First of all, we set the initial iteration $k_i^{(0)} = 0$. Let $M >0$ be a given integer. For $m=0$ to $M-1$, we compute $k_i^{(m+1)}$ using
\begin{equation}\label{iter_k}
	\begin{cases}
		U^{(m)}_i =  u^n +  \Delta t \sum\limits_{j=1}^s a_{ij} k^{(m)}_j, \\
		Q^{(m)}_i = \big(u^n\big)^2 +  2\Delta t \sum\limits_{j=1}^s a_{ij} U^{(m)}_j k^{(m)}_j, \\
		U^{(m+1)}_i =  u^n +  \Delta t \sum\limits_{j=1}^s a_{ij} k^{(m+1)}_j, \\
		k^{ (m+1)}_i =  D_1 \left(-\frac{\eta}{6} Q^{ (m)}_i - \frac{\eta}{3} (U^{(m)}_i)^2 - \mu^2 D_1^2 U^{(m+1)}_i \right).
	\end{cases}
\end{equation}
If $\max\limits_{i}\|k_i^{ (m+1)} - k_i^{(m)}\|_{\infty}/\|k_i^{(m+1)}\|_{\infty} < \mathrm{Tol}$, we stop the iteration and set $\wt{k}_i = k_i^{(m+1)}$; otherwise, we set $\wt{k}_i = k_i^{(M)}$. Then $\wt{u}^{n+1}$ is updated via
\begin{align}
	\wt{u}^{n+1} = u^n + \Delta t \sum_{i=1}^{s} b_i \wt{k}_i.
\end{align}
Further, we apply the idea of practically invariants-preserving (EIP) method proposed in \cite{Cai2020An} to update $\wt{u}^{n+1}$, so as to obtain the numerical solution $u^{n+1}$. For the sake of clarity, we briefly describe the modified projection method. Since the proposed QAV-EPRK scheme preserves the discrete mass and energy conservation laws of the KdV equation, the projection solution $\wh{u}^{n+1}$ is computed by
\begin{equation}\label{projection_step}
	\begin{cases}
		\wh{u}^{n+1} = \wt{u}^{n+1}  + \lambda \frac{\delta \mathcal{H}}{\delta u}[\wt{u}^{n+1}] + \nu \mathbf{e},\\ 
		(\wh{u}^{n+1},1)_h = (u^0,1)_h,\\
		\mathcal{H}[\wh{u}^{n+1}] = \mathcal{H}[u^0],
	\end{cases}
\end{equation}
where $\lambda$ and $\nu$ are two Lagrange multipliers, and 
$$\mathbf{e} = (1,1,\cdots,1)^T \in V_h,\quad \mathcal{H}[u] = -\frac{\eta}{6} \big(u^3,1\big)_h + \frac{\mu^2}{2} \|D_1 u\|_h^2,\quad \frac{\delta \mathcal{H}}{\delta u}[u] = -\frac{\eta}{2}u^2 - \mu^2 D_1^2 u.$$
According to the first two equations of the system \eqref{projection_step}, we can deduce
\begin{equation}
	\nu = \dfrac{(u^0,1)_h - (\wt{u}^{n+1},1)_h - \lambda\left(\frac{\delta \mathcal{H}}{\delta u}[\wt{u}^{n+1}],1 \right)_h}{|\Omega|},\quad |\Omega| = b-a.
\end{equation}
Denote 
\begin{equation}
	\phi^{n+1} = \wt{u}^{n+1} + \dfrac{(u^0,1)_h - (\wt{u}^{n+1},1)_h}{|\Omega|}\mathbf{e}, \quad \psi^{n+1} = \frac{\delta \mathcal{H}}{\delta u}[\wt{u}^{n+1}] - \dfrac{\left(\frac{\delta \mathcal{H}}{\delta u}[\wt{u}^{n+1}],1 \right)_h}{|\Omega|}\mathbf{e}.
\end{equation}
Then the system \eqref{projection_step} can be written equivalently into
\begin{equation}\label{modified_projection_step}
	\begin{cases}
		\wh{u}^{n+1} = \phi^{n+1}  + \lambda \psi^{n+1},\\ 
		\mathcal{H}[\wh{u}^{n+1}] = \mathcal{H}[u^0],
	\end{cases}
\end{equation}
where only a nonlinear algebraic equation $\mathcal{H}[\phi^{n+1}  + \lambda \psi^{n+1}] = \mathcal{H}[u^0]$ needs to be solved. Applying the Newton iteration method, we have
\begin{equation}
	\lambda_{k+1} = \lambda_k - \dfrac{\mathcal{H}[\phi^{n+1}  + \lambda_k \psi^{n+1}] - \mathcal{H}[u^0]}{\left( \frac{\delta \mathcal{H}}{\delta u}[\phi^{n+1}  + \lambda_k \psi^{n+1}], \psi^{n+1}\right)_h},
\end{equation}
where the initial iteration is taken as $\lambda_0 = 0.$ According to the idea of the EIP method, we update the numerical solution $u^{n+1}$ by computing only one step Newton iteration
\begin{equation}
	u^{n+1} = \phi^{n+1}  - \dfrac{\mathcal{H}[\phi^{n+1} ] - \mathcal{H}[u^0]}{\left( \frac{\delta \mathcal{H}}{\delta u}[\phi^{n+1}], \psi^{n+1}\right)_h} \psi^{n+1}. 
\end{equation}

\begin{rmk}
	The total cost of solving the practically structure-preserving iteration mainly depends on the system \eqref{iter_k}, which is essentially a system of linear equations with constant coefficients with respect to the unknowns $k_i^{(m+1)}$ and can be solved efficiently by the FFT algorithm \cite{GongSISC2020ARBITRARILY}. For traditional structure-preserving algorithms, the iterative tolerance usually needs to be set as machine precision. However, according to the theoretical analysis of the EIP method \cite{Cai2020An}, we can set an appropriately large iterative tolerance, which can still achieve the effect of practically preserving structure.  By comparisons, it greatly reduces the requirement of traditional structure-preserving algorithms and improves the computational efficiency. In particular, the costless EIP correction also helps us to prevent the accumulation of round-off errors so as to ensure the practically preserving structure in a long time simulation. Numerical experiments in the next section verify the effectiveness and efficiency of the practically structure-preserving iterative strategy.
\end{rmk}

\section{Numerical results}
In this section, we focus on the proposed QAV-EPRK methods with the Gaussian collocation coefficients to conduct several numerical experiments, where the convergence rates are firstly presented to demonstrate the high-order accuracy in time and space of the proposed schemes. Subsequently, some benchmark examples are calculated to verify  energy conservation and effectiveness of the newly proposed schemes.  Unless otherwise stated, the default value of the iterative tolerance is set as $\text{Tol} = 1.0 \times 10^{-14}$ and the number of maximum iterative step is fixed to $M = 100$.

\subsection{Accuracy test} \label{eg:AT}
	We first perform simulations to test the convergence rates of the proposed methods, where the QAV-EPRK scheme with $s$-stage is denoted by QAV-EPRK-$s$. We consider the mode \eqref{KdVeq} with the following analytic solution \cite{Alexander1979Galerkin} 
	\begin{align}
		u(x, t) = 3 c\; \sech^2 \left( \kappa x - \omega t - x_0\right),
	\end{align}
	where $\kappa = \frac{\sqrt{ \eta c}}{2 \mu}$ and $\omega =  c \eta \kappa$. 
	The model parameters are set as $\eta =1$, $\mu = 1$, $c = 1$ and $x_0 = 0$. The initial condition is derived from the exact solution.  The computational domain is taken as $\Omega = [-40, 40]$.

	Due to the space limitation, we only take the QAV-EPRK-3 as an example to test the space accuracy. Meanwhile, we choose time step as $\tau  =10^{-4}$ to prevent the errors in time discretization from contaminating our results.  With grid sizes from $N = 100$ to $300$ by using the increment of $50$, the discrete $L^2$ and $L^{\infty}$ errors are calculated up to the final time $T = 1$. The corresponding results are reported in Figure \ref{fig:test-space}, where we clearly observe the spectral accuracy in space for our newly developed scheme.

		\begin{figure}[H]
		\centering
		\subfigure{
			\includegraphics[width=0.35\textwidth,height=0.25\textwidth]{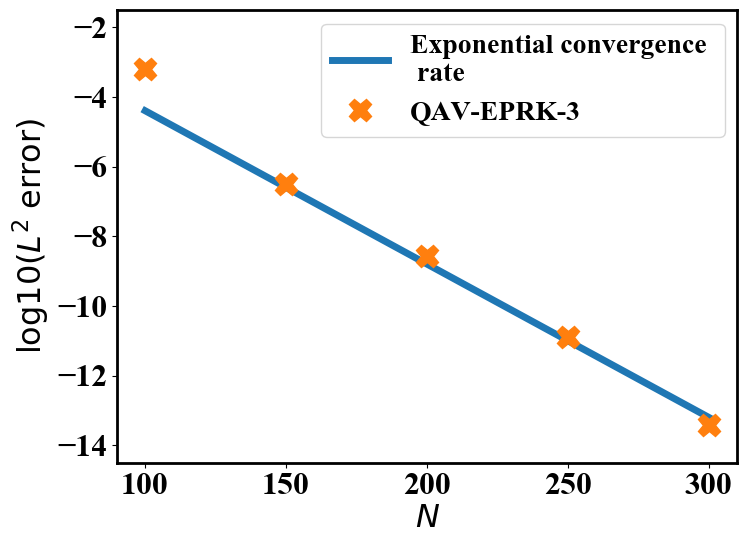}\qquad \qquad
			\includegraphics[width=0.35\textwidth,height=0.25\textwidth]{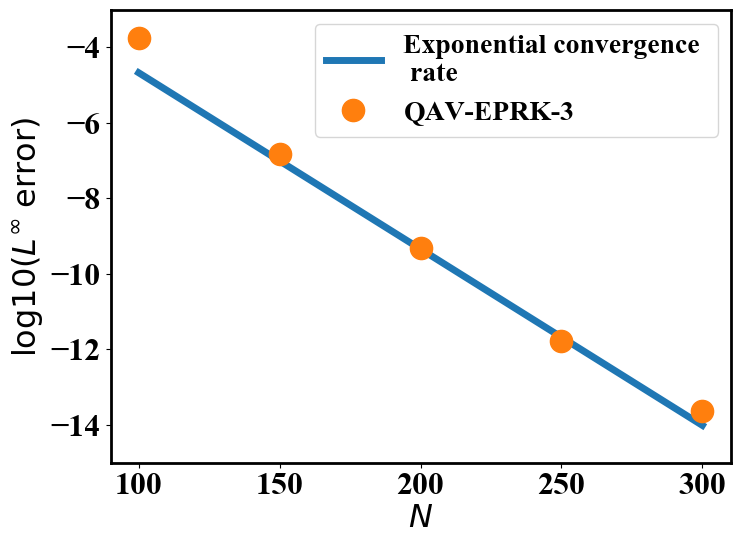}
		}
		\caption{{\bf Subsection} \ref{eg:AT}: The QAV-EPRK-3 scheme serves as an example to show space step refinement test. A spectral accuracy is achieved.\label{fig:test-space}}
	\end{figure}
	
	\begin{figure}[H]
		\centering
		\subfigure{
			\includegraphics[width=0.35\textwidth,height=0.3\textwidth]{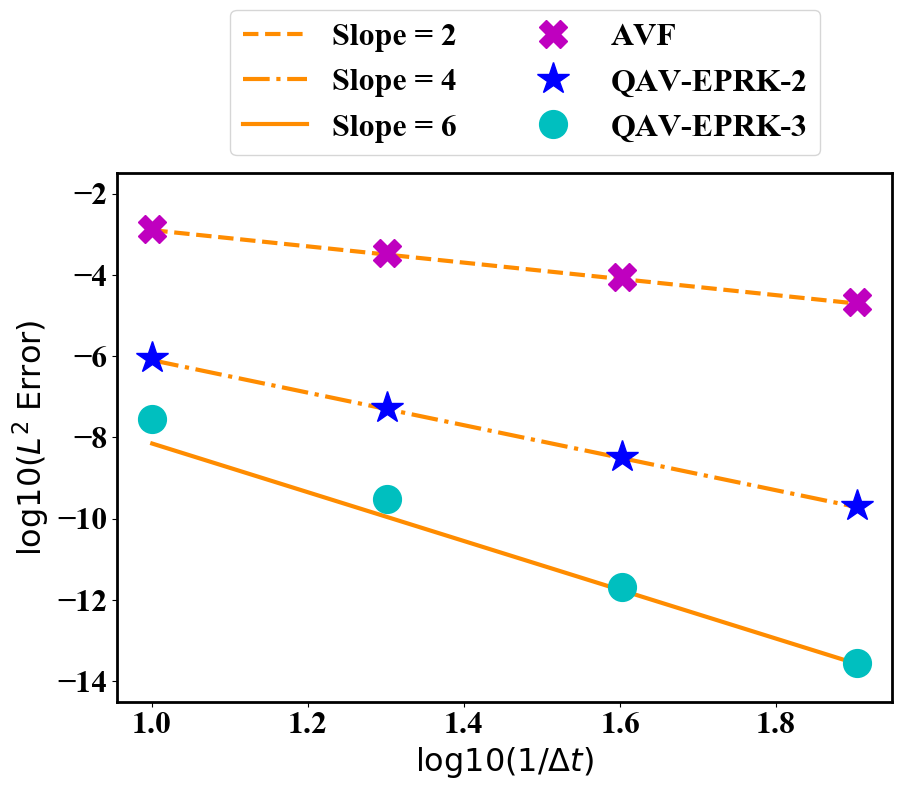}\qquad \qquad
			\includegraphics[width=0.35\textwidth,height=0.3\textwidth]{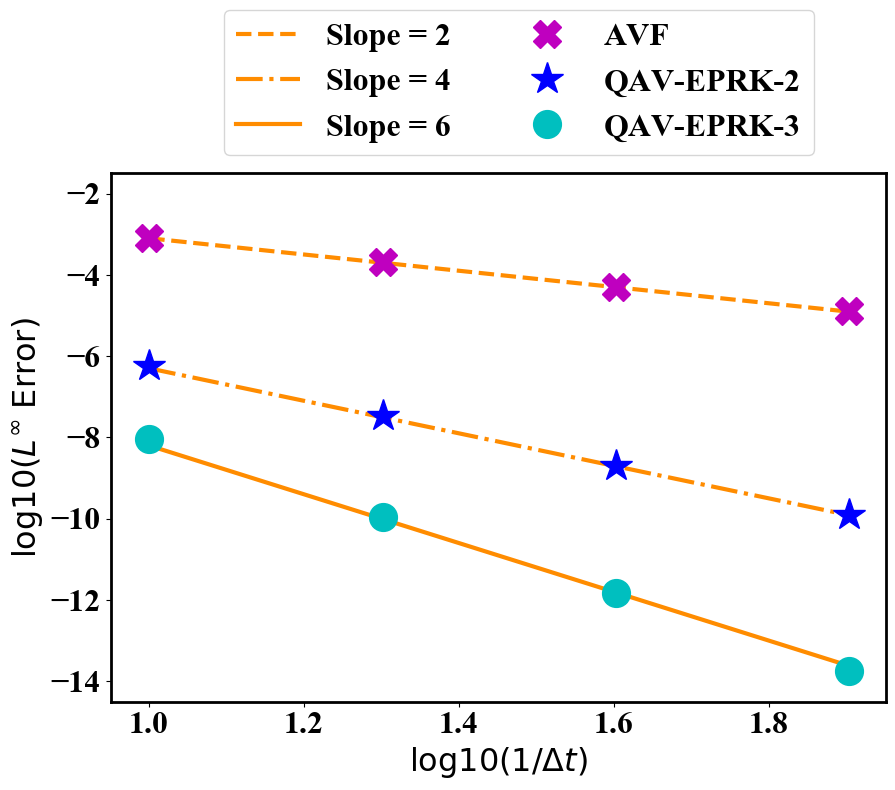}
		}
		\caption{{\bf Subsection} \ref{eg:AT}:  Time step refinement test for the QAV-EPRK schemes. These sub-figures demonstrate the proposed schemes can reach their expected convergence rates. Moreover, their numerical errors for the high-order QAV-EPRK schemes are much smaller than that of the second-order AVF scheme.\label{fig:test-time}}
	\end{figure}

	Next, we test the time convergence rate and choose  $N=512$ spatial meshes. Such a fine mesh can make the spatial discretization error negligible compared with the time discretization error.  In Figure \ref{fig:test-time}, we plot the discrete $L^2$ and $L^{\infty}$ errors at $T = 1$ by varying the time step from $\Delta t = 0.1$ to $\Delta t = 0.0125$ with a factor of $1/2$. We can observe that the two high-order QAV-EPRK schemes exhibit perfect fourth and sixth order accuracy in time as expected, respectively. In particular, the discrete $L^2$ and $L^{\infty}$ errors of the high-order QAV-EPRK schemes are significantly smaller than the second-order AVF scheme with the same time steps.

	Finally, to further demonstrate the advantages of our proposed high-order schemes with the second-order AVF scheme \cite{Quispel2008A}, we test the discrete $L^2$ error for $u$ at $T=10$ smaller than $1.0 \times 10^{-8}$, where the time steps are $\Delta t = 5.0 \times 10^{-5}$ for the second-order AVF scheme, $\Delta t = 10^{-2}$ for the  scheme QAV-EPRK-2 and $\Delta t = 4 \times 10^{-2}$ for the scheme QAV-EPRK-3.
	Their computational costs are summarized in Figure \ref{fig:test-CPU}. It is clearly observed that the high-order QAV-EPRK schemes spend much less CPU time than the second-order AVF scheme to reach the same accuracy, which implies our newly proposed QAV-EPRK schemes are superior to the lower order scheme for accurate in term of long-time simulations.
	\begin{figure}[H]
		\centering
		\subfigure{
			\includegraphics[width=0.50\textwidth,height=0.35\textwidth]{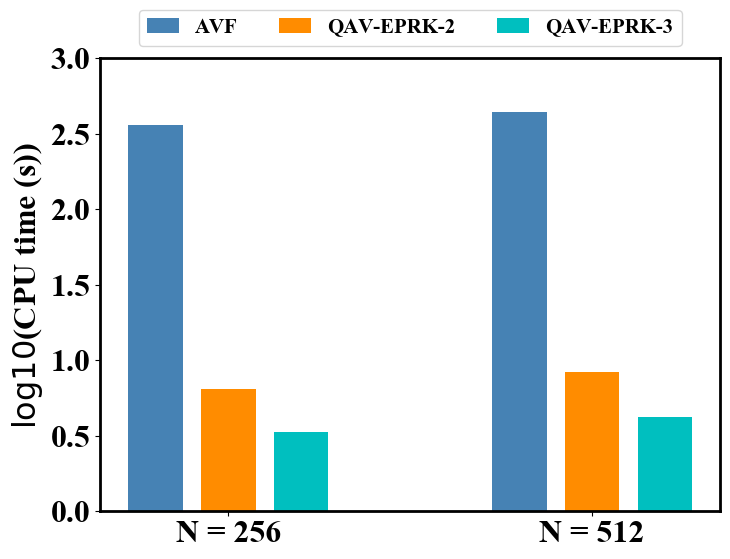}
		}
		\caption{{\bf Subsection} \ref{eg:AT}: Comparison of CPU times between the AVF scheme, QAV-EPRK-2 and QAV-EPRK-3 under the same accuracy. This bar chart shows the high-order schemes perform more superior than the second-order scheme.\label{fig:test-CPU}}
	\end{figure}

\subsection{Invariant test}\label{eg:IT}
\begin{figure}[H]
	\centering
	\subfigure[Comparison of the invariants computed by using QAV-EPRK-2.]{
		\includegraphics[width=0.30\textwidth,height=0.25\textwidth]{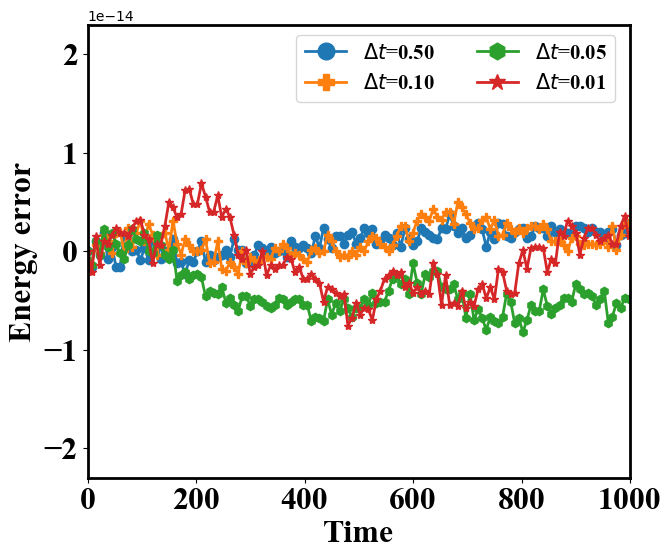}
		\includegraphics[width=0.30\textwidth,height=0.25\textwidth]{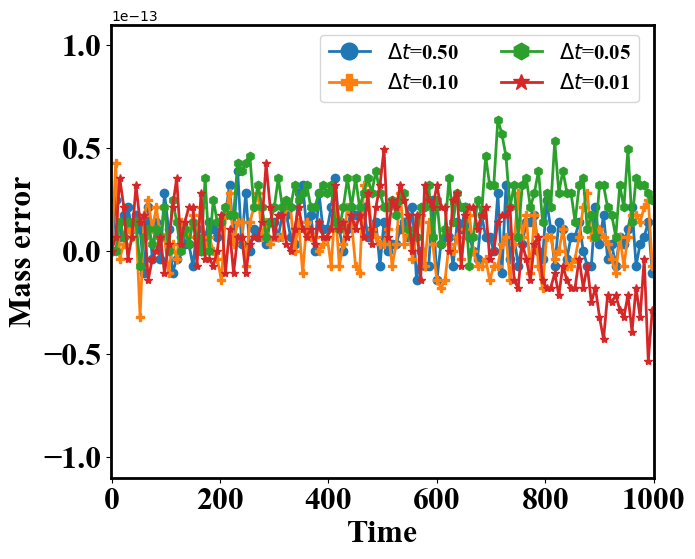}
		\includegraphics[width=0.30\textwidth,height=0.25\textwidth]{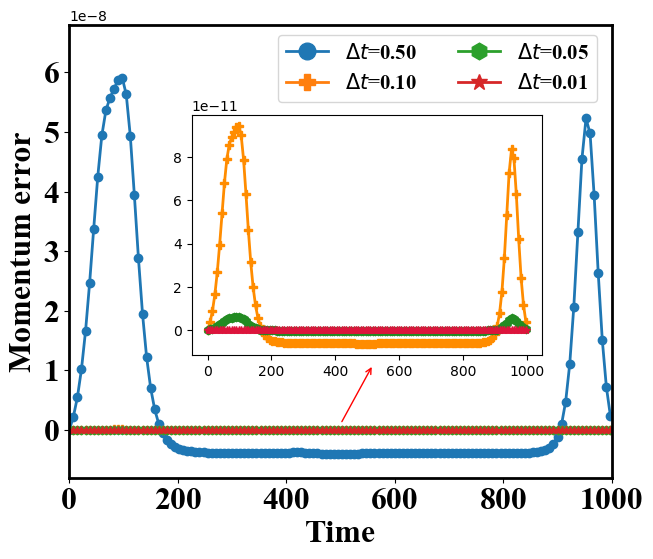}		
	}\\
	\subfigure[Comparison of the invariants computed by using QAV-EPRK-3.]{
		\includegraphics[width=0.30\textwidth,height=0.25\textwidth]{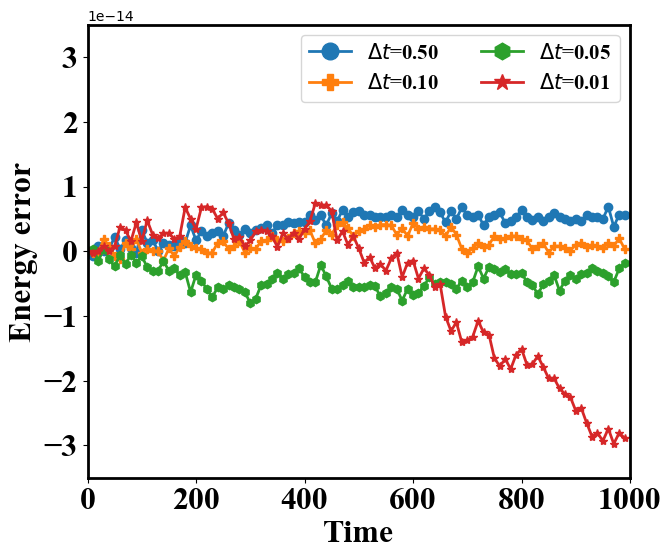}
		\includegraphics[width=0.30\textwidth,height=0.25\textwidth]{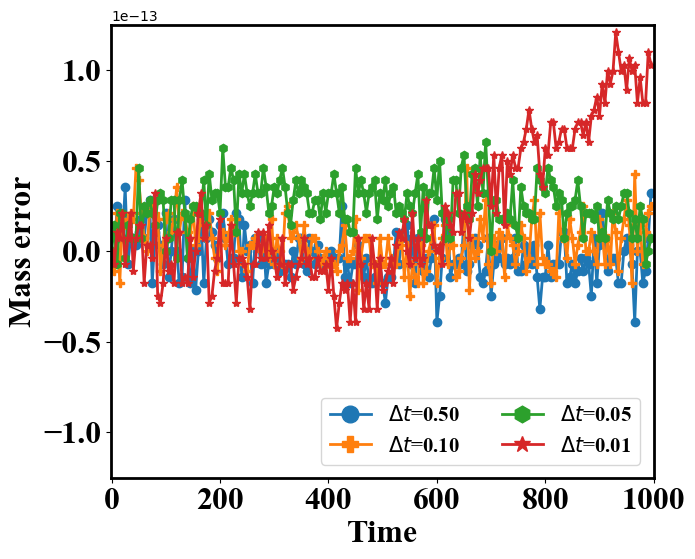}
		\includegraphics[width=0.30\textwidth,height=0.25\textwidth]{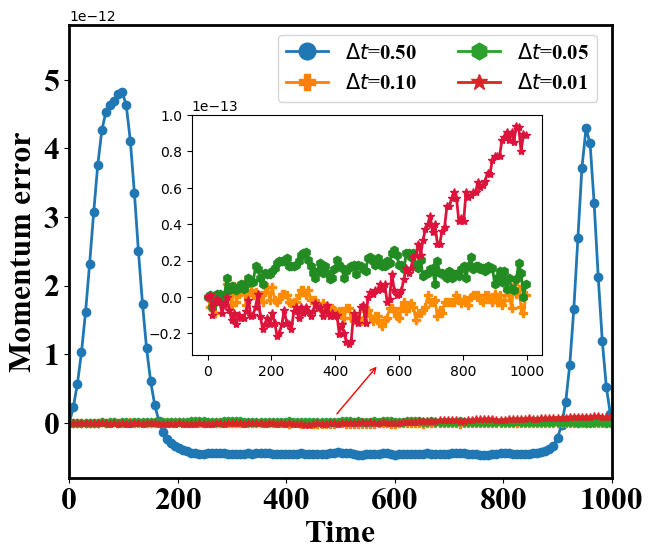}		
	}
	\caption{{\bf Subsection} \ref{eg:IT}: Time evolutions of the three invariants  versus time  calculated by using the high-order QAV-EPRK schemes and different time steps.\label{fig:test-invariants}}
\end{figure}
In this example, we conduct several numerical simulations to test the energy conservation of the developed schemes. We start with the interaction of three solitons  and the corresponding initial condition  is given by
\begin{align}
	u_0(x) = \sum_{i=1}^{3} 12 \kappa_i^2 \sech^2(\kappa_i (x - x_i)), 
\end{align}
with
\begin{align}
	&\kappa_1 = 0.3,\; \kappa_2 = 0.25,\; \kappa_3 = 0.2,\quad
	x_1 = -60,\;  x_2 = -44 ,\;  x_3 = -26.
\end{align}
The model parameters  will be specified as   $\eta = 1$ and $\mu = 1$.  We solve the KdV equation in a periodic domain $\Omega = [-100, 100]$ using a pseudo-spectral method in space with $N = 512$. We carry out different time steps to perform energy conservation. In Figure \ref{fig:test-invariants}, we plot the changes of energy, mass and momentum  computed by using the QAV-EPRK-2 with time steps $\Delta t = 0.5, 0.1$, $0.05$ and $0.01$.  We observe that the errors of the energy are captured accurately and the changes in mass are controlled very well by using the high-order QAV-EPRK schemes. Even though both the QAV-EPRK-2 and QAV-EPRK-3 can not preserve the  momentum conservation,  the errors of momentum that calculated by QAV-EPRK-3 are smaller than that of QAV-EPRK-2.

To further compare the advantages of  our proposed high-order QAV-EPRK schemes with the classic Gauss-type RK (GRK) methods with $s$-stage (abbr. GRK-$s$), we summarized the evolution of energy errors on long-time simulations by using the time step $\Delta t = 0.5$ and the final time $T = 5000$ in Figure \ref{fig:compri-energy}. As expected we see that the  GRK scheme can not preserve the original energy, but the energy error calculated by the GRK scheme with high accuracy is very small. The high-order QAV-EPRK schemes  instead warrant the original energy to machine accuracy. These results strongly support our claim that the technique of QAV provides a new paradigm to develop high-order original-energy-preserving numerical algorithms.

\begin{figure}[H]
	\centering
	\subfigure[Comparison of errors in energy computed by using GRK-2 and QAV-EPRK-2.]{
		\includegraphics[width=0.4\textwidth,height=0.35\textwidth]{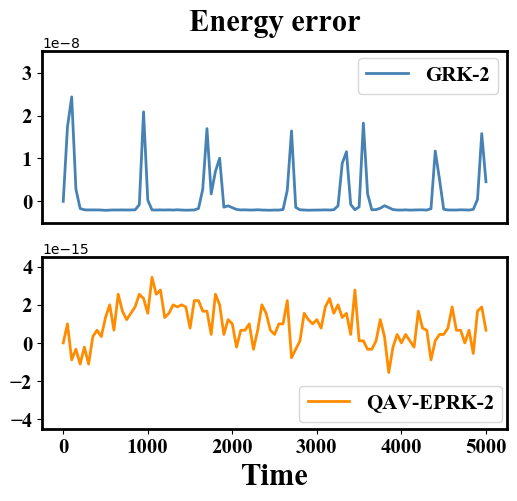}	
	}\qquad
	\subfigure[Comparison of errors in energy computed by using GRK-3 and QAV-EPRK-3.]{
		\includegraphics[width=0.4\textwidth,height=0.35\textwidth]{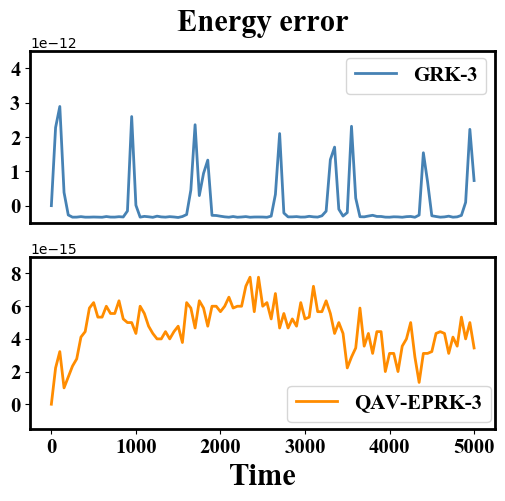}	
	}
	\caption{{\bf Subsection} \ref{eg:IT}: Long-time behavior of energy errors  between the GRK scheme and the QAV-EPRK scheme with time step size $\Delta t = 0.5$.\label{fig:compri-energy}}
\end{figure}

\vspace{-1cm}
\begin{figure}[H]
	\centering
	\subfigure{
		\includegraphics[width=0.30\textwidth,height=0.3\textwidth]{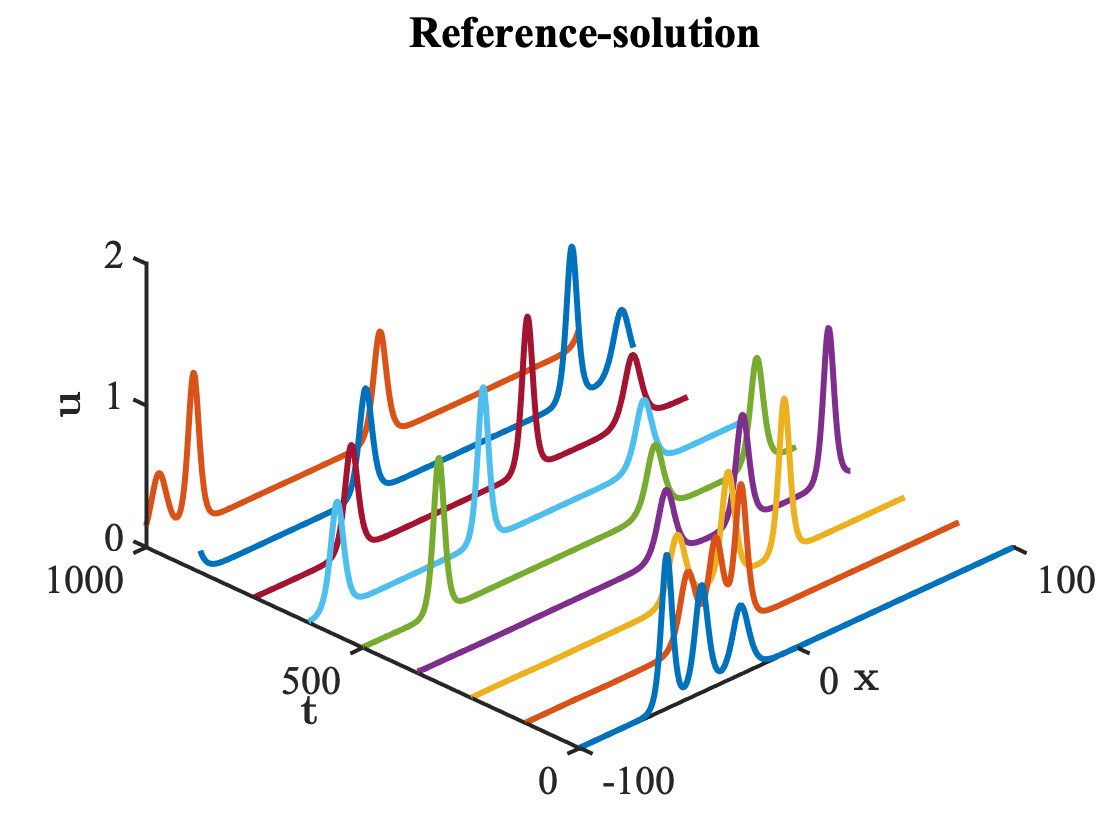}
		\includegraphics[width=0.30\textwidth,height=0.3\textwidth]{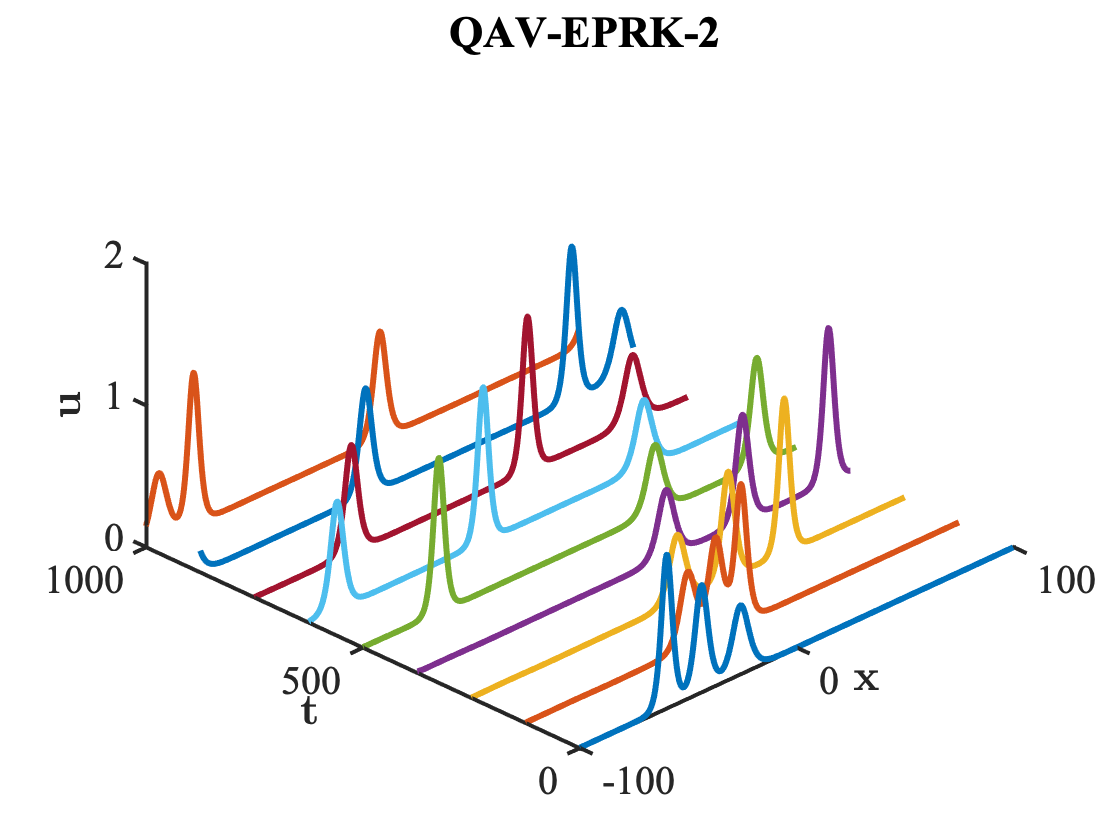}
		\includegraphics[width=0.30\textwidth,height=0.3\textwidth]{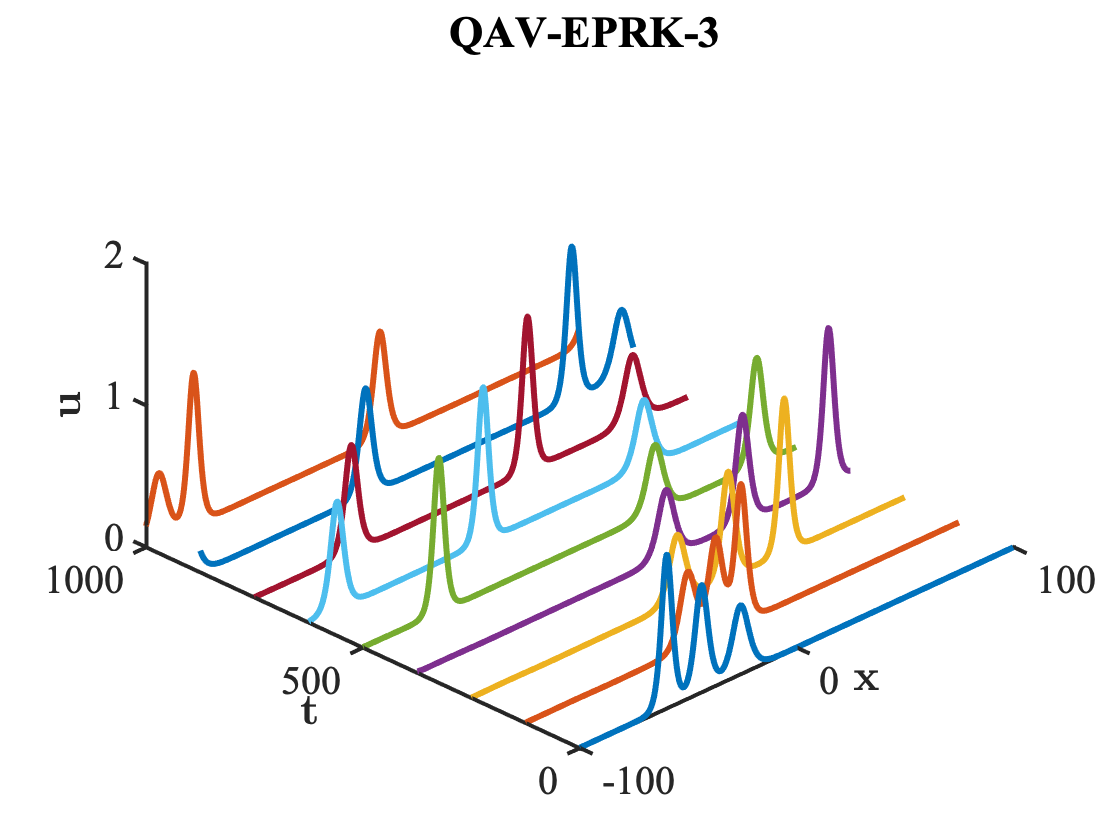}	
	}
	\caption{{\bf Subsection} \ref{eg:IT}: The profiles of numerical solution computed by the high-order QAV-EPRK schemes with time step $\Delta t = 0.1$.\label{fig:three-solitons}}
\end{figure}

As the analytical solution is unknown, we use  the numerical solution from the QAV-EPRK-3 scheme with $\Delta t = 10^{-4}$ as the reference solution. Figure \ref{fig:three-solitons} depicts the profiles of numerical solution  that calculated by the high-order QAV-EPRK schemes and time step $\Delta t = 0.1$ for the motions and interactions of KdV equation with three solitons.  Compared with reference solution, we observe fairly accurate prediction of the motions and interactions of the three solitons with a large time step size  in various time.  These numerical phenomenons are consistent with the reported literatures.
In a word, the  numerical behaviors above support our claim that our proposed high-order schemes are very efficient to deal with the motion and interactions of solitons.

\subsection{Convection-dominant problem}\label{eg:CDP}
\begin{figure}[H]
	\centering
	\subfigure{
		\includegraphics[width=0.4\textwidth,height=0.3\textwidth]{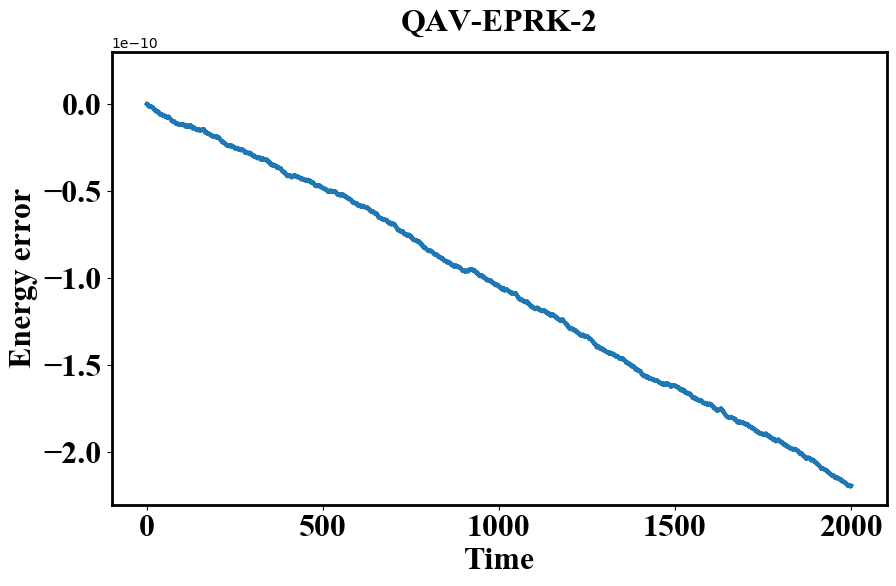}\qquad
		\includegraphics[width=0.4\textwidth,height=0.3\textwidth]{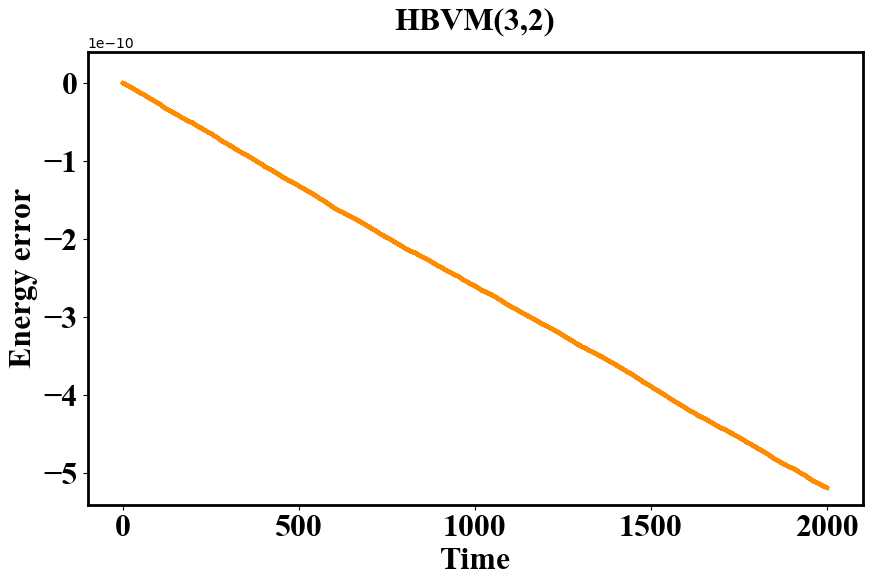}
	}
	\caption{{\bf Subsection} \ref{eg:CDP}: Evolution of errors in original energy produced by QAV-EPRK-2 and  HBVM(3,2).  As one can see that a linear error growth is observed.
		This may be the accumulation of round-off errors in long-time numerical simulation. \label{fig:two-sch-energ}}
\end{figure}
In this example, we test the convection-dominant problem with the interaction  of two solitary waves propagation of the KdV equation ($\eta = 6$, $\mu = 1$), where the initial condition is given by
\begin{align}
	u_0(x) = 12 \frac{3 + 4 \cosh(2 x ) + \cosh(4 x )}{(3 \cosh( x ) + \cosh(3 x ))^2}.
\end{align}
Due to the space limitation, we just take the codes developed from the  QAV-EPRK-2 and HBVM(3,2) (Ref. \cite{Brugnano2010Hamiltonian}) as a demo to simulate this convection-dominant problem in a domain $\Omega = [-20, 20]$ with $N = 256$ spatial meshes.  We perform this simulation with $\Delta t = 0.005$ and depict the errors of the original energy at the end time $T = 2000$. The energy errors for the long-term numerical simulation are listed in Figure \ref{fig:two-sch-energ}. Even though the two schemes theoretically warrant the original energy conservation,  as can be seen in Figure \ref{fig:two-sch-energ} that the amplitude of the errors in the original energy is about $10^{-10}$ but not 
up to machine precision. This reason may be that the use of finite arithmetic may sometimes generate a mild numerical drift of the energy over long-time numerical simulations, which causes the growth of  the  energy error. 
\begin{figure}[H]
	\centering
	\subfigure{
		\includegraphics[width=0.45\textwidth,height=0.3\textwidth]{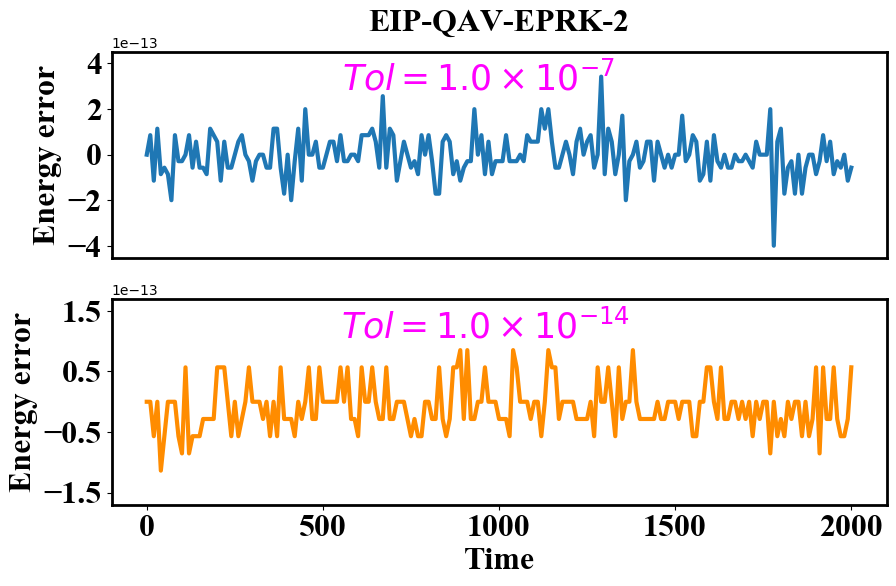}\qquad
		\includegraphics[width=0.45\textwidth,height=0.3\textwidth]{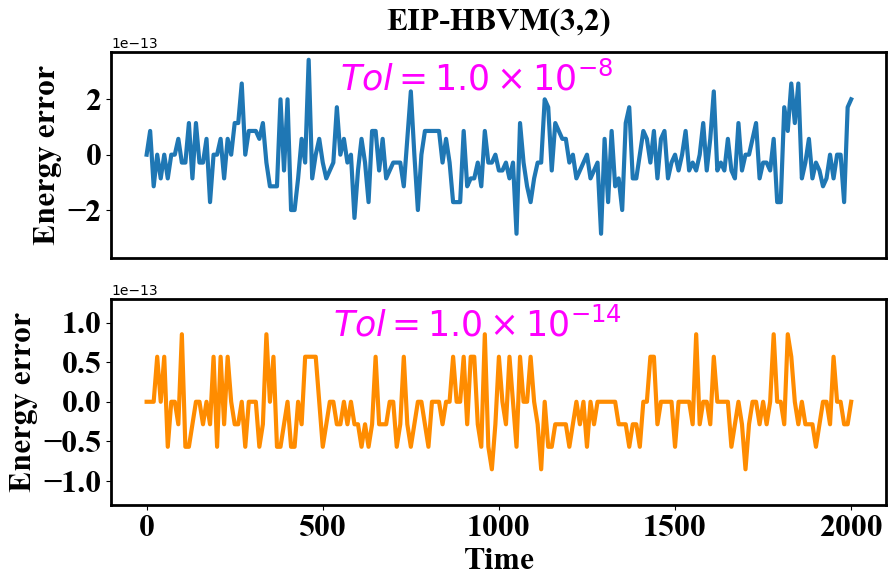}
	}\caption{{\bf Subsection} \ref{eg:CDP}: Evolution of errors in original energy produced by EIP-QAV-EPRK-2 and EIP-HBVM(3,2)  with using various iterative tolerances. \label{fig:two-sch-energ2}}
\end{figure}

\begin{figure}[H]
	\centering
	\subfigure[]{
		\includegraphics[width=0.45\textwidth,height=0.28\textwidth]{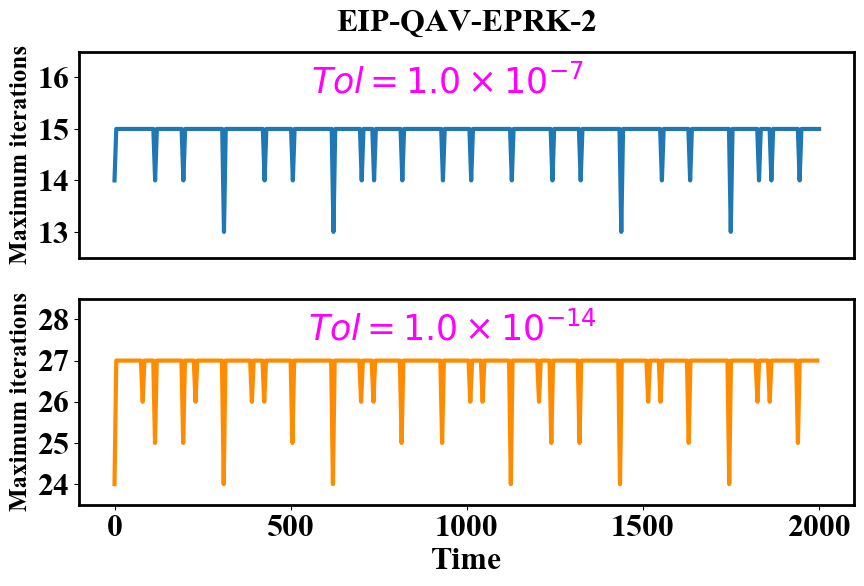}\qquad
		\includegraphics[width=0.45\textwidth,height=0.28\textwidth]{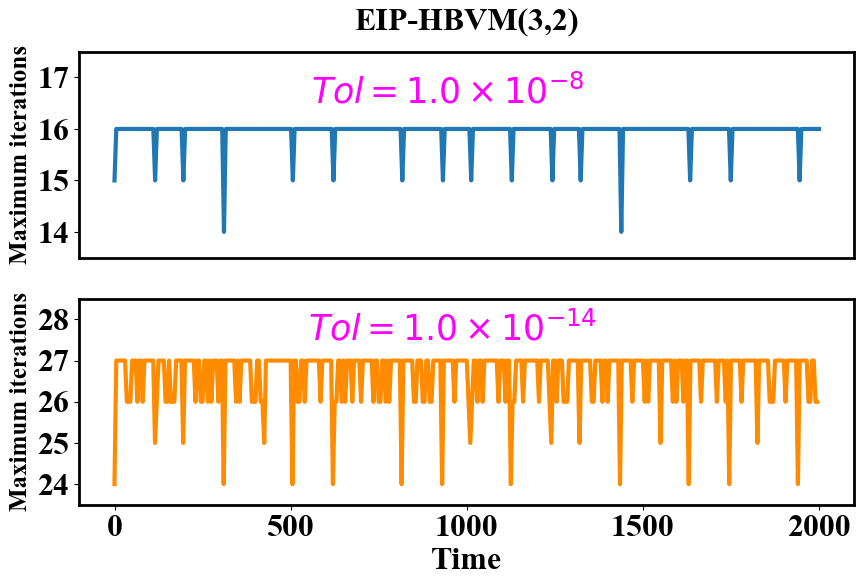}
	}\vspace{-2mm}\\
	\subfigure[]{
		\includegraphics[width=0.5\textwidth,height=0.3\textwidth]{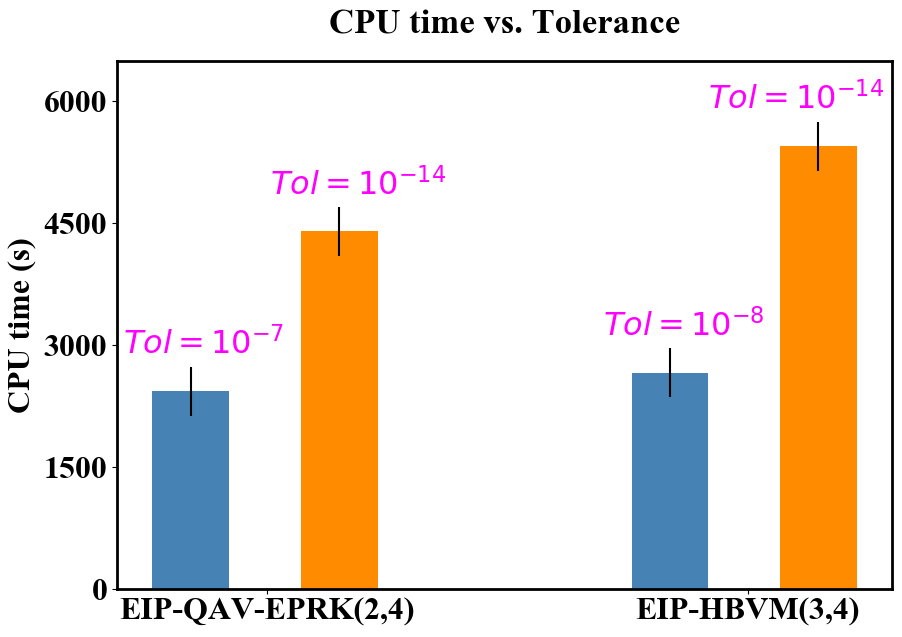}
	}
	\caption{{\bf Subsection} \ref{eg:CDP}: 
		(a) The time evolutions of the maximum iterations by using EIP-QAV-EPRK-2 and EIP-HBVM(3,2)  with different iterative tolerances.
		(b) Comparison of CPU times for the two schemes by using various iterative tolerance from $t=0$ to $t=2000$. This bar chart demonstrates that the practically structure-preserving algorithms improve the computing efficiency using a large tolerance  in long-time numerical simulations when achieving the same numerical behaviors.\label{fig:iter-CPU-compari}}
\end{figure}

\begin{figure}[H]
	\centering
	\subfigure{
		\includegraphics[width=0.4\textwidth,height=0.25\textwidth]{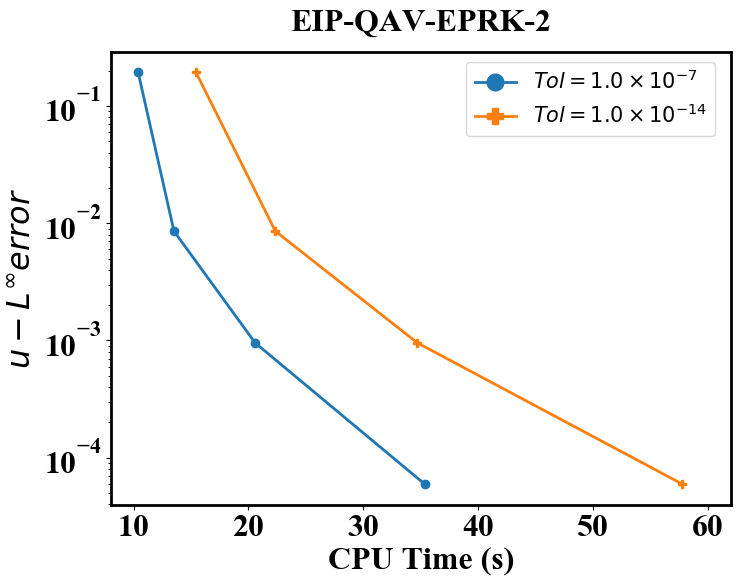}\qquad
		\includegraphics[width=0.4\textwidth,height=0.25\textwidth]{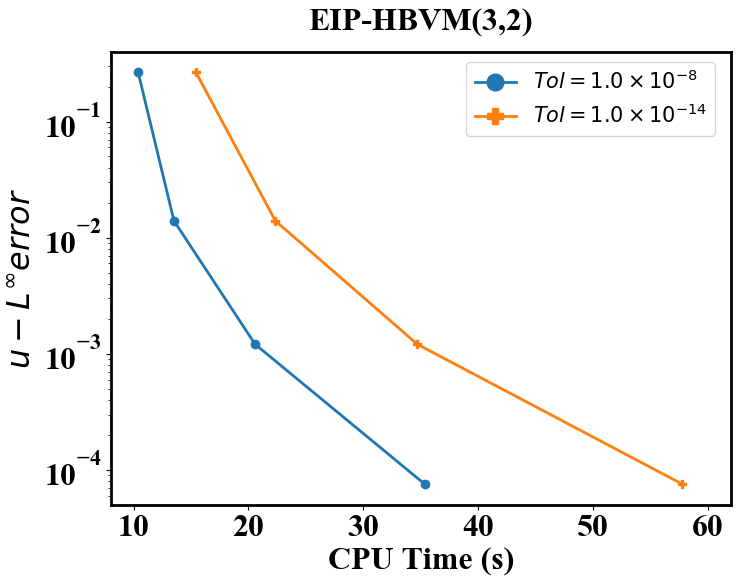}
	}
	\caption{{\bf Subsection} \ref{eg:CDP}:  The maximal error in solution vs. CPU time for various iterative tolerance.\label{fig:Uerror-CPUtime}}
\end{figure}

To circumvent this apparent drawback, we adopt the two schemes that combined with the  EIP technique that described in section \ref{sec:EIP} to perform this example.  For comparison purposes, we set the iterative tolerance $\text{Tol} = 1.0 \times 10^{-7}$ for EIP-QAV-EPRK-2 and  $\text{Tol} = 1.0 \times 10^{-8}$ for EIP-HBVM(3,2) to run these codes and $\text{Tol} = 1.0 \times 10^{-14}$ to calculate numerical solution  as a reference, respectively.  We plot the evolution of the original energy errors in Figure \ref{fig:two-sch-energ2}, the maximum iterations and the total CPU time in Figure \ref{fig:iter-CPU-compari}, respectively.  Compared with the results in Figure \ref{fig:two-sch-energ},  it clearly indicates that the two schemes with EIP technique can easily control the linear growth of the energy errors that generated by the round-off errors, where the original energy errors remain stable and are up to machine precision. 
It follows from Figure \ref{fig:two-sch-energ2} that we clearly observe that the original energy errors that computed by using a large tolerance are consistent with that of the reference tolerance $\text{Tol} = 1.0 \times 10^{-14}$, while Figure \ref{fig:iter-CPU-compari} shows that the two schemes with using large tolerance  greatly save time-consuming and vastly improved the computational efficiency for long-time dynamic simulations when yielding the same numerical effects.
Subsequently, we also the investigate the numerical solution  error in $L^{\infty}$ norm versus the total CPU time for various tolerance at the stopping time $T=10$, where we choose the reference solution that calculated by EIP-QAV-EPRK-3 scheme with $\Delta t = 1.0 \times 10^{-5}$ as the `exact' solution. In Figure \ref{fig:Uerror-CPUtime}, we observe that the practically structure-preserving schemes with a large tolerance can yield the same numerical accuracy as the reference counterpart, but the former is  more effective than the latter in practical calculation.
Thus, these numerical results deeply support our conclusion that our proposed high-order schemes with the EIP technique have a strong practicality in practice. 
Additionally, by comparison, our newly proposed high-order QAV-EPRK schemes can achieve at least the same numerical behaviors as HBVM \cite{Brugnano2019Energy}.
Now, we can draw a conclusion that the high-order QAV-EPRK schemes with EIP technique not only keep the high computational accuracy, but also bring significant computational time saving for solving this model.

\subsection{Random bimodal wave}\label{eg:RBW}
\begin{table}[H]
	{\caption{{\bf Subsection} \ref{eg:RBW}: Parameters of the initial conditions.}\label{Tab:rand-wave}}
	\vspace{-0.2cm}
	\begin{center}
		\begin{tabular}{c  c c c c c c}\hline\specialrule{0em}{2pt}{1pt}
			Case  &  $Q_2/Q_1$    &$k_1$       &$K_1$  &$k_2$      &$K_2$  \\ \hline\specialrule{0em}{1.5pt}{1.5pt}
			I  &   0    &   1  &   0.1    & $--$ &$--$  \\[-1mm]
			II &   0.5 &   1  &   0.1    &0.5&0.05\\[-1mm]
			III &   0.5&    1  &  0.1    &0.5&0.1\\[-1mm]
			IV &      1 &    1  &  0.1    &0.5&0.05\\[-1mm]
			V & 0.5 &     1  &   0.1    &1.5&0.05\\[-1mm]
			VI &     1&      1  &  0.1    &1.5&0.05\\ \specialrule{0em}{2pt}{1pt}\hline
		\end{tabular}
	\end{center}
\end{table}	
\vspace{-2mm}

\begin{figure}[H]
	\centering
	\subfigure{
		\includegraphics[width=0.3\textwidth,height=0.225\textwidth]{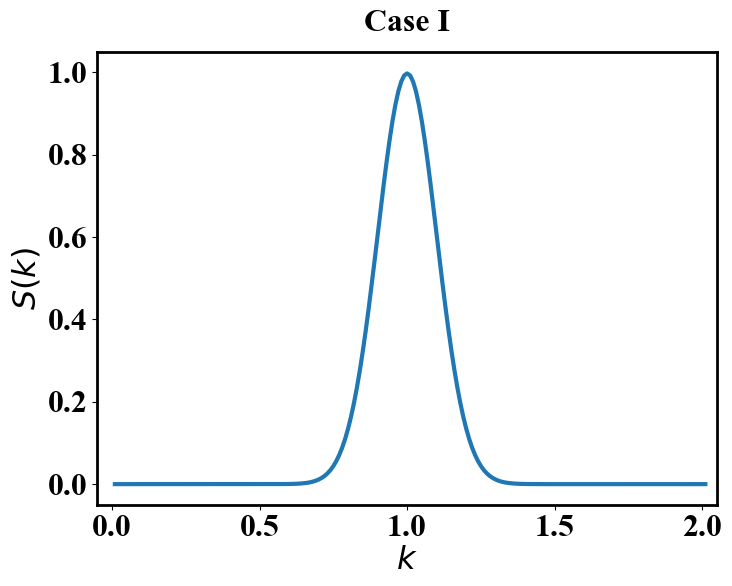}
		\includegraphics[width=0.3\textwidth,height=0.225\textwidth]{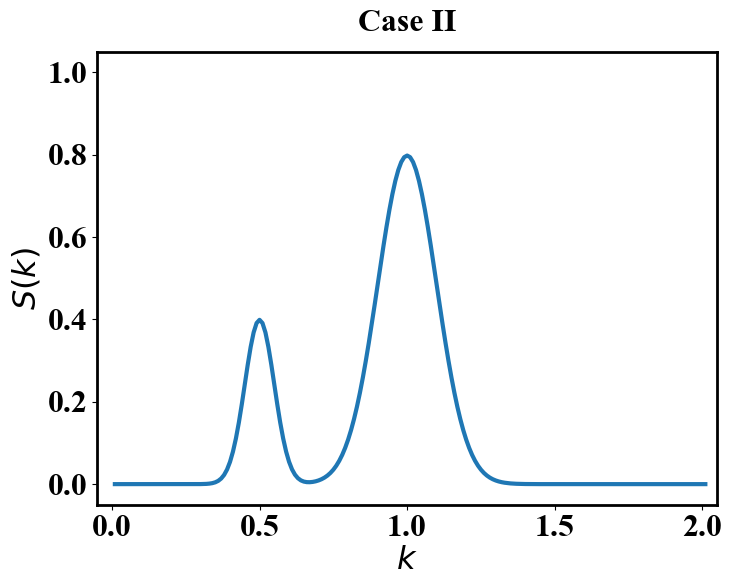}
		\includegraphics[width=0.3\textwidth,height=0.225\textwidth]{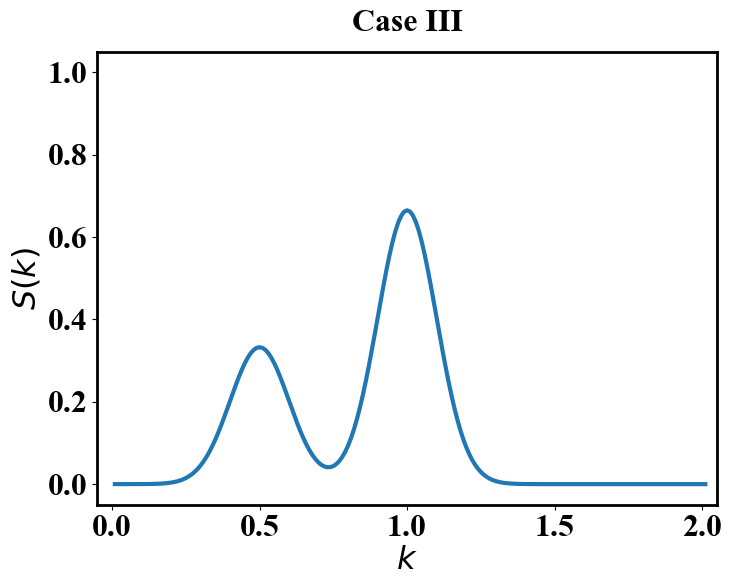}
	}\\
	\subfigure{
		\includegraphics[width=0.3\textwidth,height=0.225\textwidth]{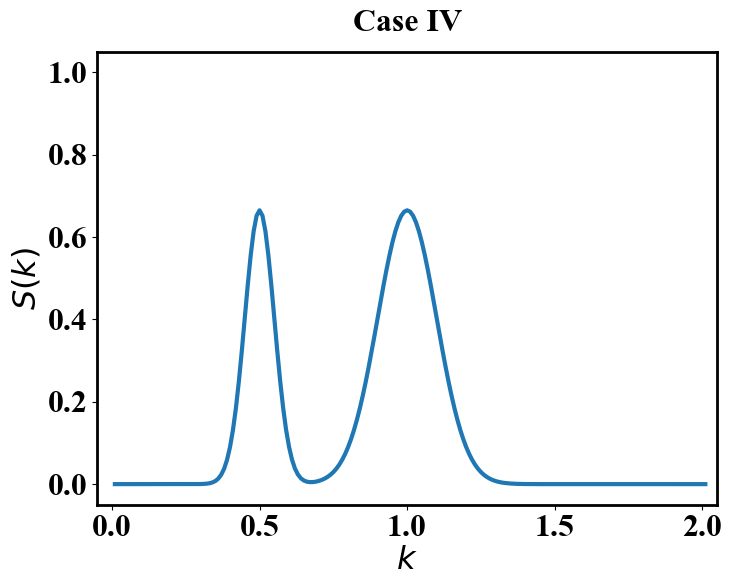}
		\includegraphics[width=0.3\textwidth,height=0.225\textwidth]{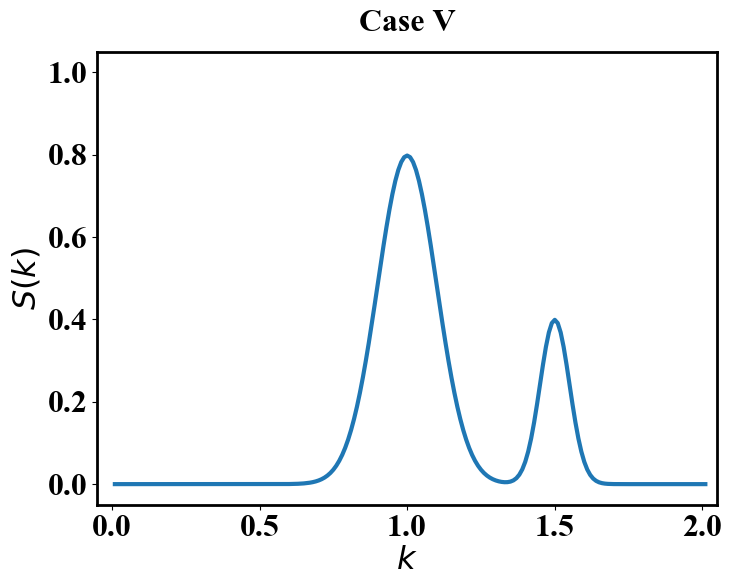}
		\includegraphics[width=0.3\textwidth,height=0.225\textwidth]{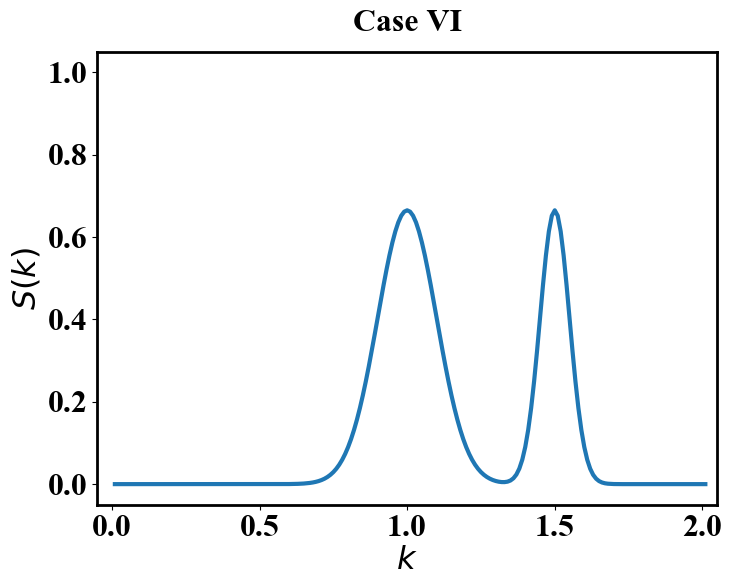}	
	}
	\caption{{\bf Subsection} \ref{eg:RBW}: Shapes of the initial $S(k)$ for six cases, where the parameters are given in Table \ref{Tab:rand-wave}.\label{fig:6cases-SK}}
\end{figure}
In this example, we consider the numerical simulation of random bimodal wave. For more details, interested readers refer to \cite{Did2019Numerical}. The initial datum for the numerical simulation at $t = 0$ is specified in the form of a linear sum of cosines with randomly chosen phases
\begin{align}
	u(x, 0) = \sum_{j = 1}^{\frac{N}{2}-1} \sqrt{2 S(k_j) \Delta k} \cos(k_j x + \psi_j),
\end{align}
where $k_j = j \Delta k$, $j = 1, \cdots N/2 - 1$ are admitted wave-numbers, $\Delta k = 0.01$, $N = 2^{12}$. The model parameters are specified as $\eta = 1$ and $\mu = \sqrt{2/9}$.
Here, the initial phase $\psi_j$ is the random number located in $(0, 2 \pi)$. The coefficients of the wave-numbers power spectrum $S(k_j)$ are given by
\begin{align}
	S(k)  = Q_1 \exp\left(-\frac{(k - k_1)^2}{2 K_1^2} \right) + Q_2 \exp\left(-\frac{(k - k_2)^2}{2 K_2^2} \right),\quad k >0.
\end{align}
In this case, the parameters in this simulation are listed in the following Table \ref{Tab:rand-wave}. In Figure \ref{fig:6cases-SK}, we present the shapes of the initial Fourier transform with a cut off spectrum tail for six cases.

\begin{figure}[H]
	\centering
	\subfigure{
		\includegraphics[width=0.485\textwidth,height=0.25\textwidth]{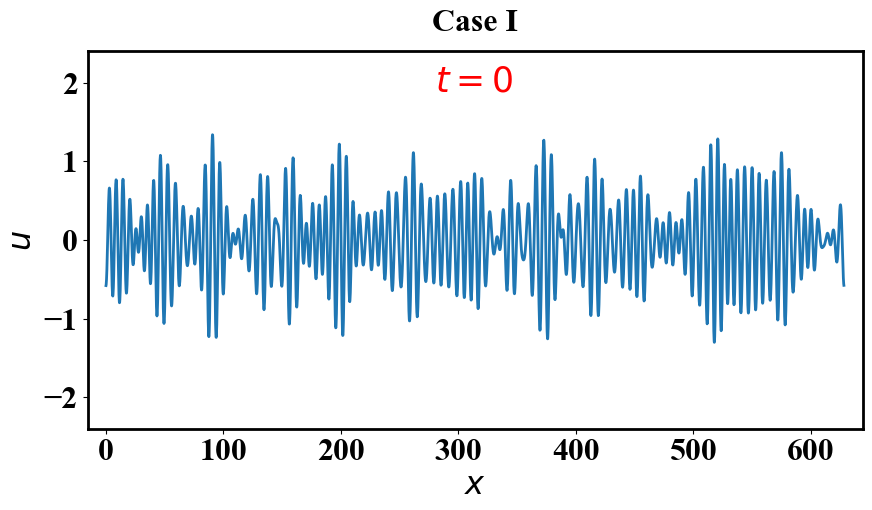}
		\includegraphics[width=0.485\textwidth,height=0.25\textwidth]{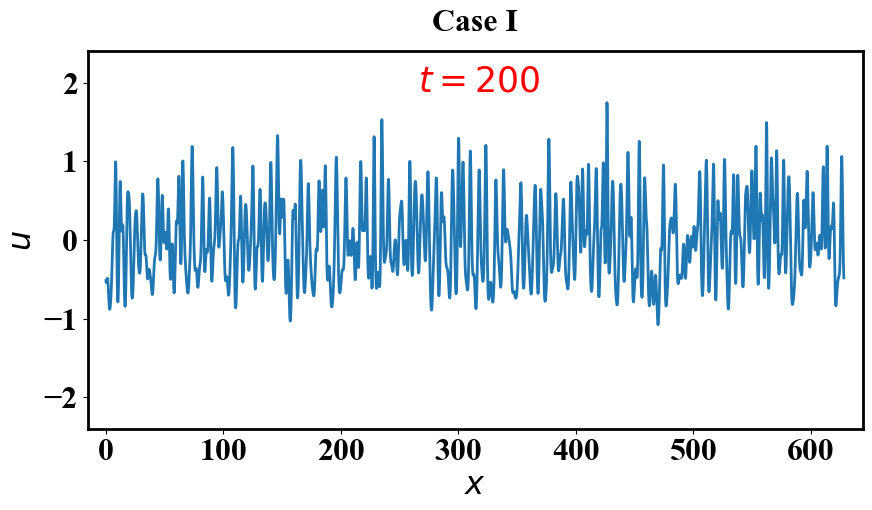}
	}
	\subfigure{
		\includegraphics[width=0.485\textwidth,height=0.25\textwidth]{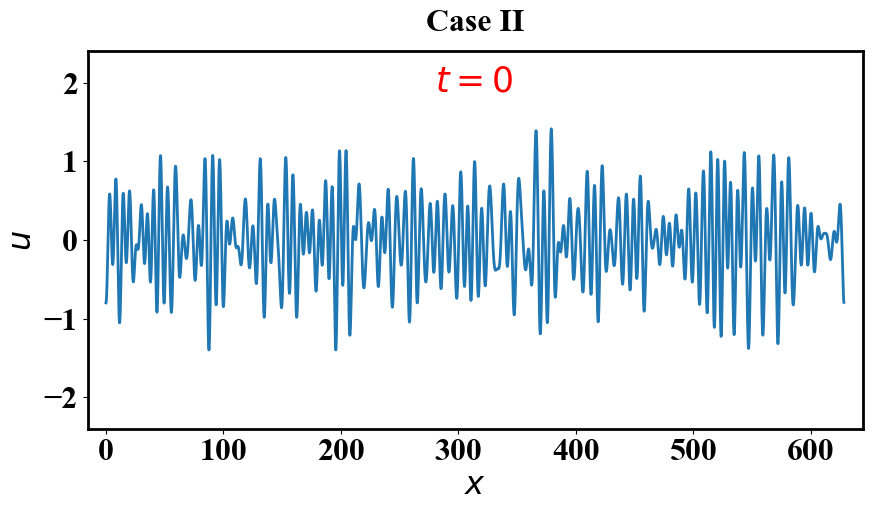}
		\includegraphics[width=0.485\textwidth,height=0.25\textwidth]{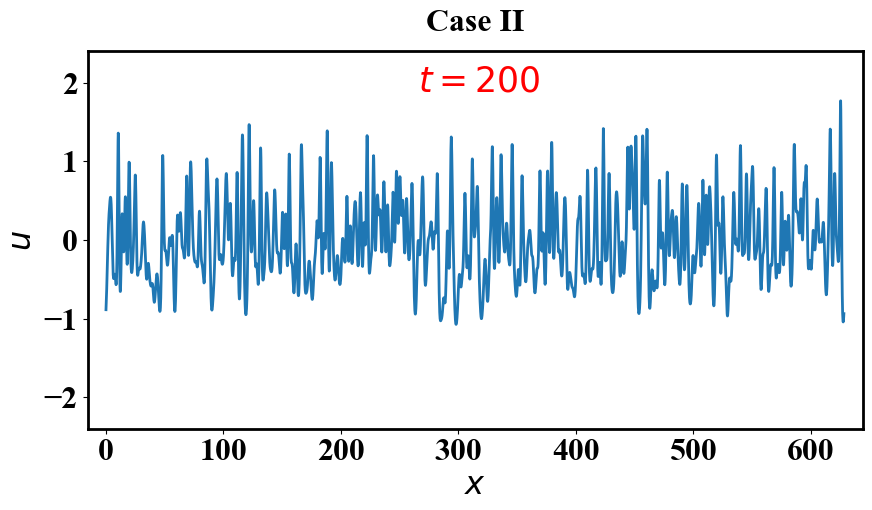}
	}\\
	\subfigure{
		\includegraphics[width=0.485\textwidth,height=0.25\textwidth]{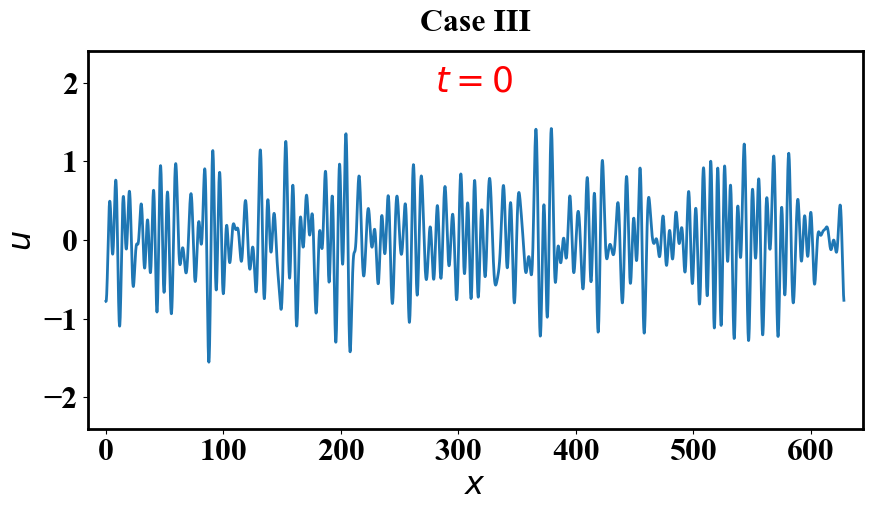}
		\includegraphics[width=0.485\textwidth,height=0.25\textwidth]{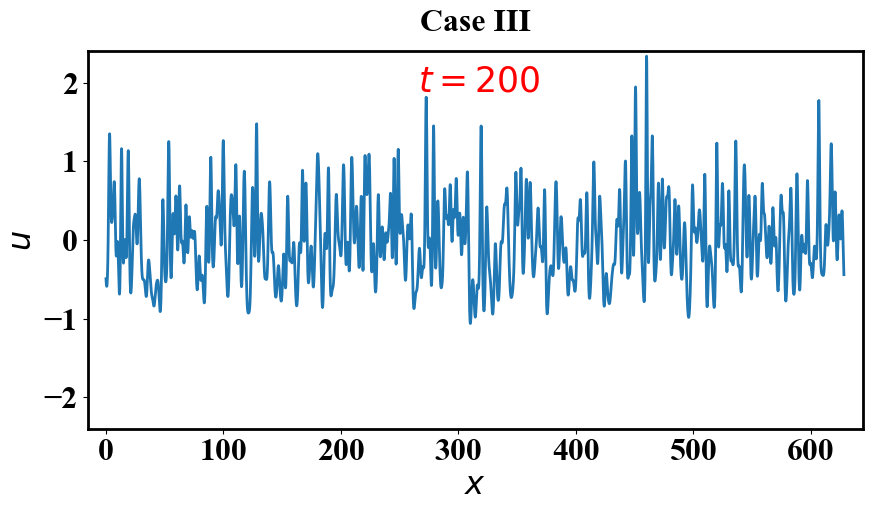}
	}
	\caption{{\bf Subsection} \ref{eg:RBW}: The profiles of numerical solution $u$ that computing by the EIP-QAV-EPRK-2 scheme. The time step is $\Delta t = 0.01$. The snapshots are taken at $t = 0$ and $t = 200$ for Case I, Case II and Case III.\label{fig:123cases-u0}}
\end{figure}

We set  the computed domain $\Omega = [0, 200 \pi]$ and assume the periodic boundary conditions. The space is discretized by using  $N $ Fourier modes and the time step is $\Delta t = 1.0 \times 10^{-2}$.  The previous test has presented the advantages of the high-order QAV-EPRK that combined with the EIP skill. Thus, we will continue to  
adopt the code of EIP-QAV-EPRK scheme to simulate this example.
The profiles of $u$ at the initial time and $t=200$ for six different cases are shown in Figure \ref{fig:123cases-u0} and Figure \ref{fig:456cases-u0}. These numerical phenomenons are consistent  with the numerical solution obtained by using the second-order numerical solver in the literature \cite{Did2019Numerical}, while we can use a relatively larger time step than the time-step used in the other methods. In Figure \ref{fig:123cases-energy} and Figure \ref{fig:456cases-energy}, we plot the energy errors of all cases with time.  As one can see that all curves of energy error are up to machine epsilon and keep stable. This confirms that the original energy is clearly very well preserved for all cases by using the EIP-QAV-EPRK-2 scheme. In a word, these numerical behaviors demonstrate the effectiveness  of our proposed schemes once more.

\begin{figure}[H]
	\centering
	\subfigure{
		\includegraphics[width=0.485\textwidth,height=0.25\textwidth]{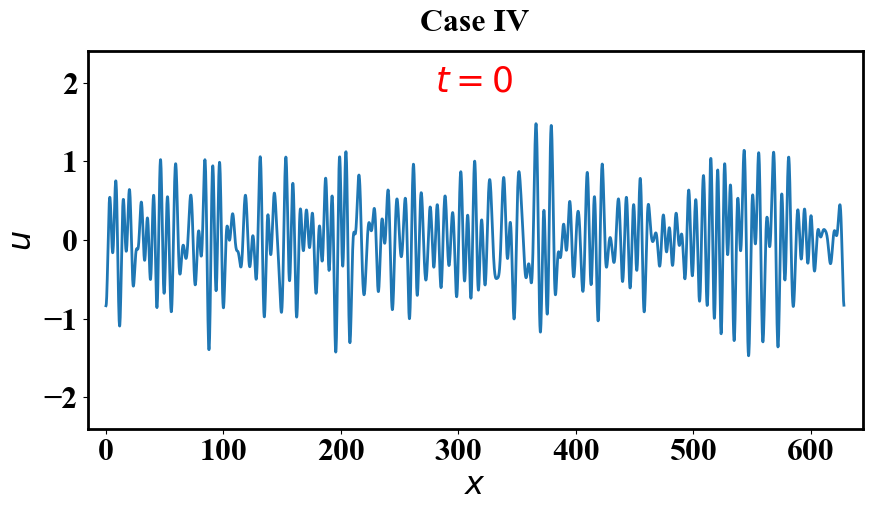}
		\includegraphics[width=0.485\textwidth,height=0.25\textwidth]{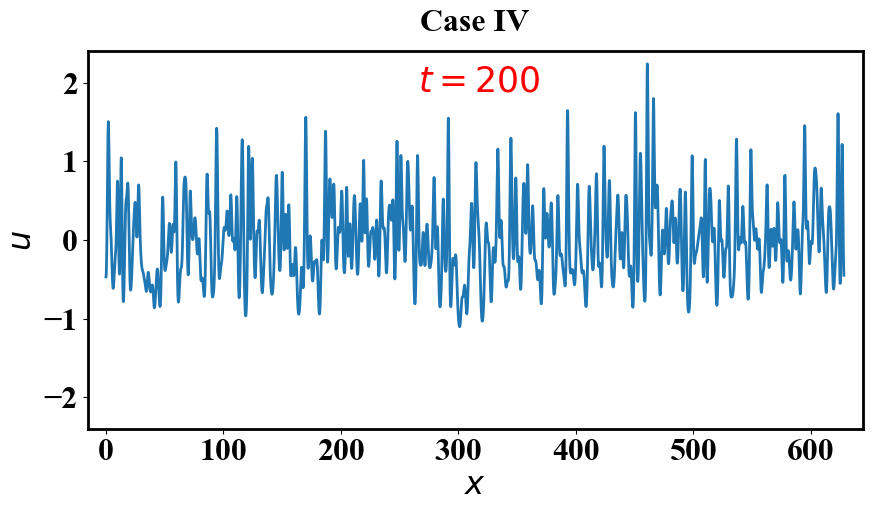}
	}\\
	\subfigure{
		\includegraphics[width=0.485\textwidth,height=0.25\textwidth]{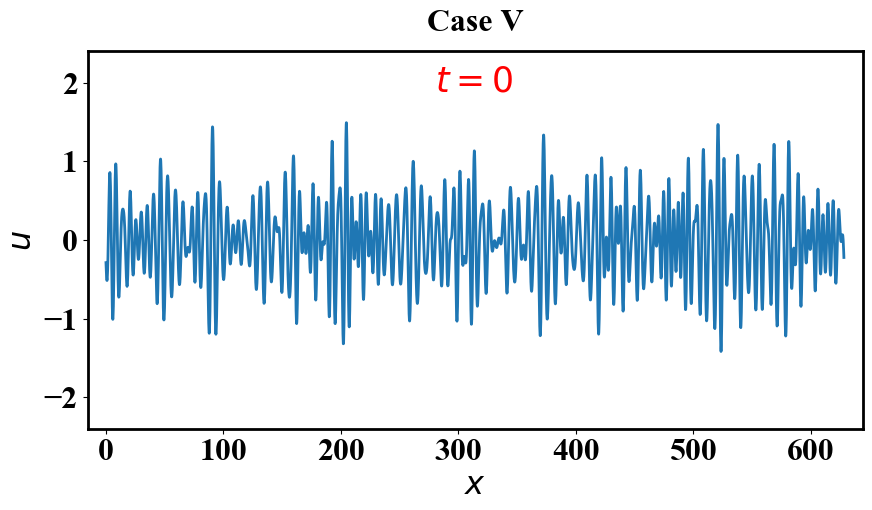}
		\includegraphics[width=0.485\textwidth,height=0.25\textwidth]{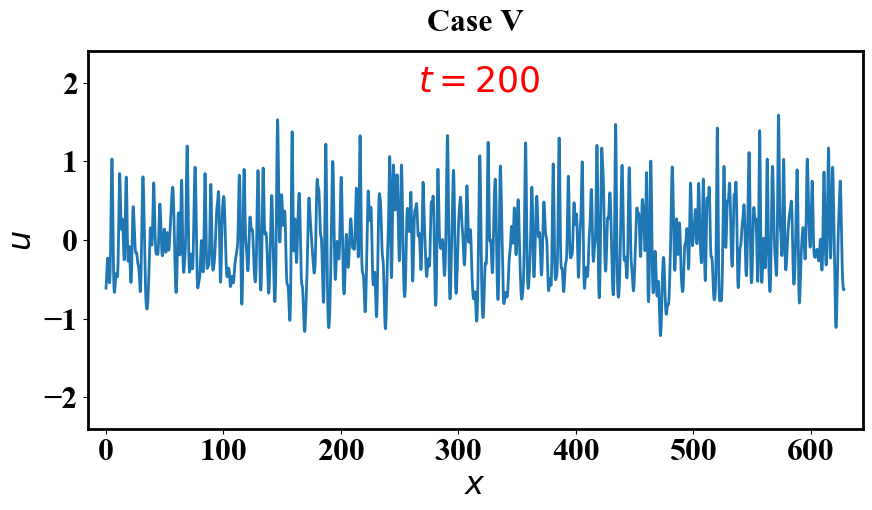}
	}\\
	\subfigure{
		\includegraphics[width=0.485\textwidth,height=0.25\textwidth]{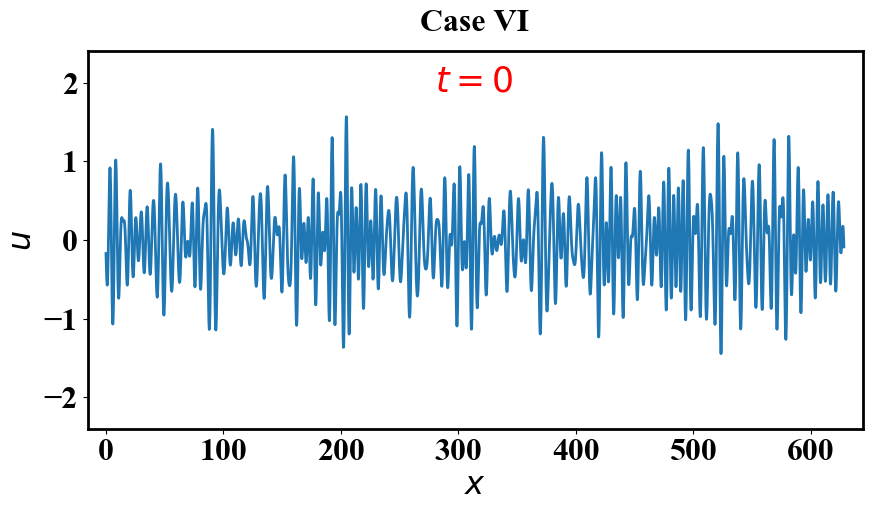}
		\includegraphics[width=0.485\textwidth,height=0.25\textwidth]{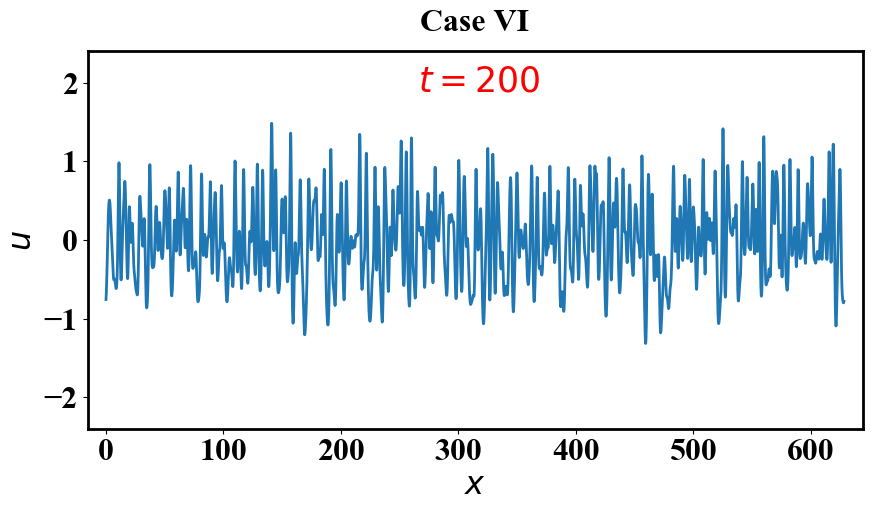}
	}
	\caption{{\bf Subsection} \ref{eg:RBW}: The profiles of numerical solution $u$ that computing by the EIP-QAV-EPRK-2 scheme. The time step is $\Delta t = 0.01$. The snapshots are taken at $t = 0$ and $t = 200$ for Case IV, Case V and Case VI.\label{fig:456cases-u0}}
\end{figure}

\begin{figure}[H]
	\centering
	\subfigure{
		\includegraphics[width=0.323\textwidth,height=0.25\textwidth]{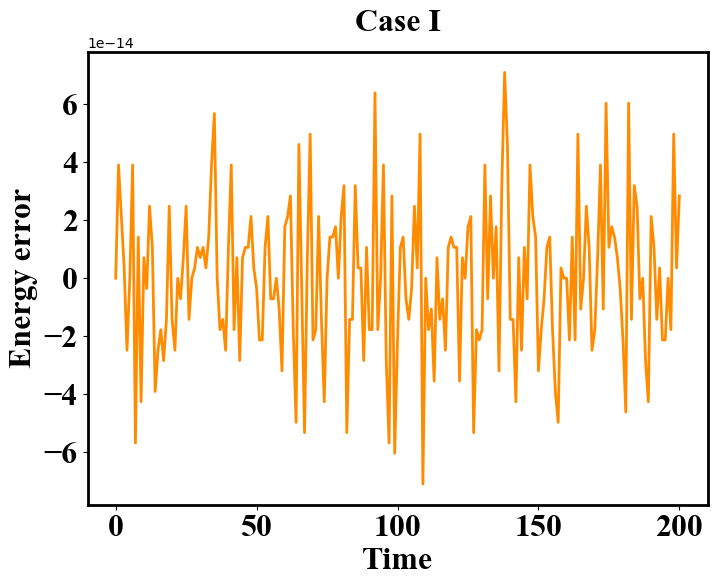}
		\includegraphics[width=0.323\textwidth,height=0.25\textwidth]{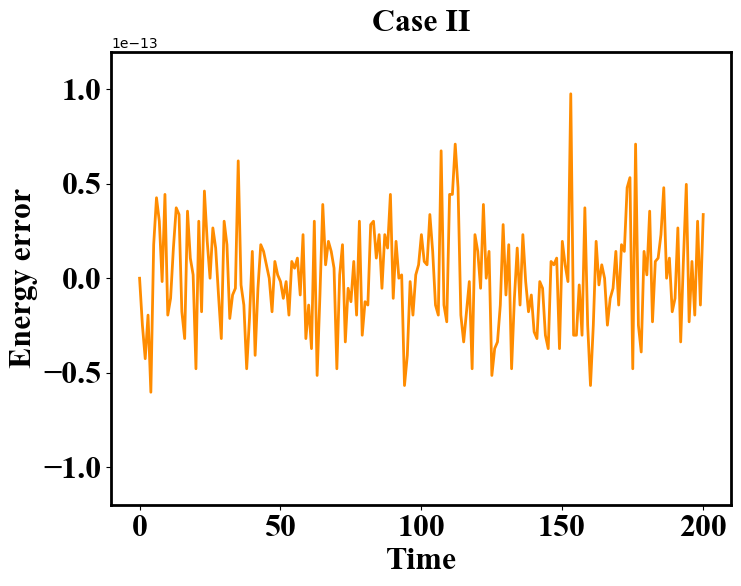}
		\includegraphics[width=0.323\textwidth,height=0.25\textwidth]{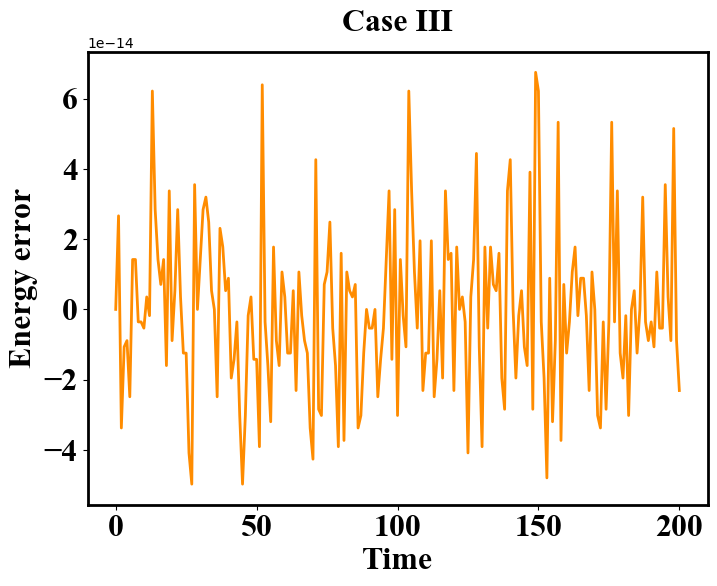}
	}
	\caption{{\bf Subsection} \ref{eg:RBW}: Time evolutions of the original energy errors for Case I, Case II and Case III.\label{fig:123cases-energy}}	
\end{figure}

\begin{figure}[H]
	\centering
	\subfigure{
		\includegraphics[width=0.323\textwidth,height=0.25\textwidth]{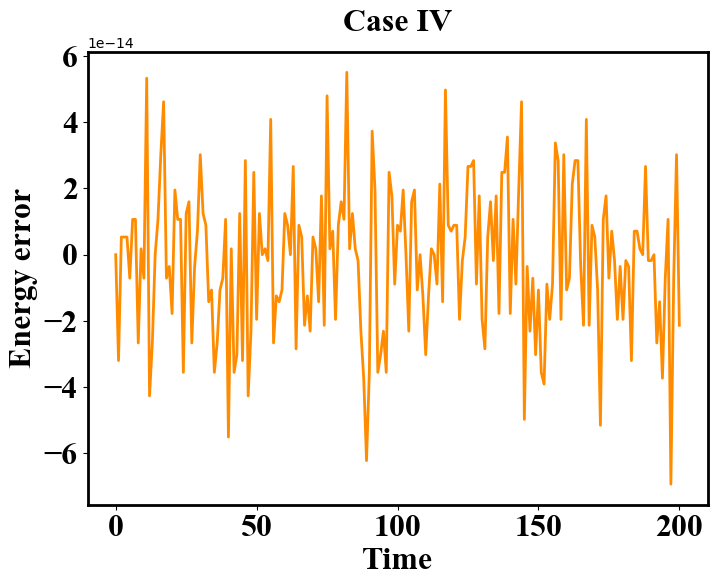}
		\includegraphics[width=0.323\textwidth,height=0.25\textwidth]{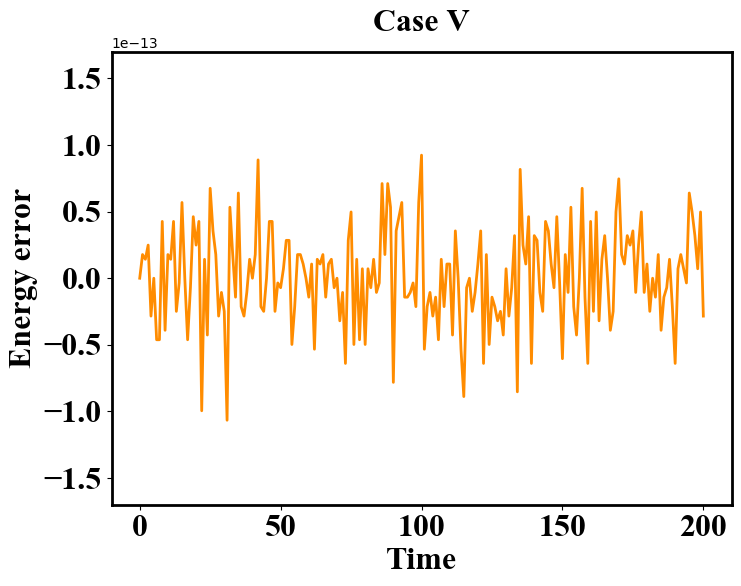}
		\includegraphics[width=0.323\textwidth,height=0.25\textwidth]{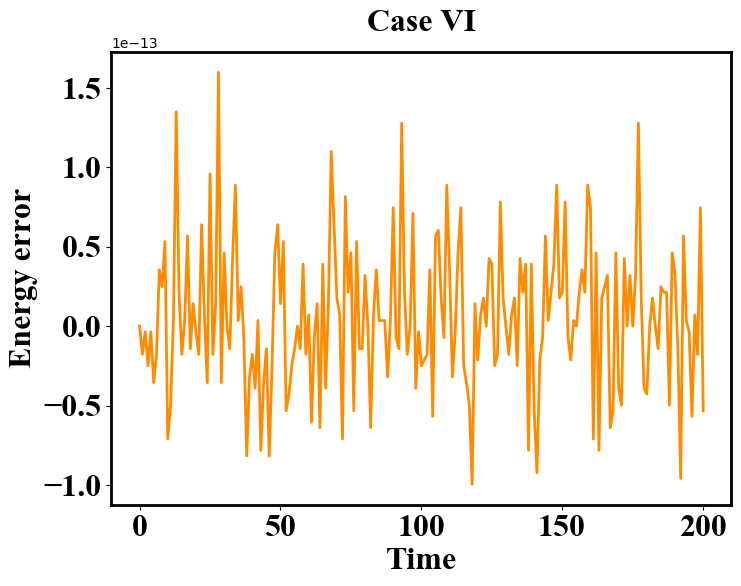}
	}
	\caption{{\bf Subsection} \ref{eg:RBW}: Time evolutions of the original energy errors for Case IV, Case V and Case VI.\label{fig:456cases-energy}}	
\end{figure}

\section{Conclusion}
In this paper, we have proposed a new technique to construct arbitrarily high-order energy-preserving algorithms for the KdV equation. It consists of two important steps, namely the QAV reformulation and the symplectic RK method. Based on our theory, a special class of RK methods can be applied directly to develop arbitrarily high-order energy-preserving algorithms for conservative systems with general polynomial energy of degree greater than 2. Different from the IEQ and SAV approaches, the proposed QAV-EPRK method can eliminate the introduced auxiliary variable and conserve the original energy conservation law. Compared with the existing high-order energy-preserving methods, our QAV-EPRK schemes are based on the traditional RK theory and do not require integrals. Numerical tests are presented to confirm the theoretical analysis and illustrate the usefulness and efficiency of the proposed schemes. It is worthwhile to emphasize that the numerical strategy presented in this paper can be generalized for conservative systems with general polynomial energy or some non-polynomial cases, which will be further discussed in our future work.

\section*{Acknowledgment}
Yuezheng Gong's work is partially supported by the Foundation of Jiangsu Key Laboratory for Numerical Simulation of Large Scale Complex Systems (Grant No. 202002), the Natural Science Foundation of Jiangsu Province (Grant No. BK20180413) and the National Natural Science Foundation of China (Grants No. 11801269, 12071216). Chunwu Wang's work is partially supported by Science Challenge Project (Grant No. TZ2018002). Qi Hong's work is  partially supported by the China Postdoctoral Science Foundation (Grant No. 2020M670116), the Foundation of Jiangsu Key Laboratory for Numerical Simulation of Large Scale Complex Systems (Grant No. 202001).

%
%\bibliography{./ref}{}
%\bibliographystyle{plain}

\end{document}